\numberwithin{equation}{section}
\newtheorem{theorem}{Theorem}[section]
\newtheorem{lemma}[theorem]{Lemma}
\newtheorem{proposition}[theorem]{Proposition}
\newtheorem{corollary}[theorem]{Corollary}
\theoremstyle{definition}
\newtheorem{definition}[theorem]{Definition}
\newtheorem{remark}[theorem]{Remark}
\newcommand{\restr}{\mathop{\raisebox{-.127ex}{\reflectbox{\rotatebox[origin=br]{-90}{$\lnot$}}}}}
\newcommand{\R}{\mathbb{R}}
\newcommand{\N}{\mathbb{N}}
\newcommand{\C}{\mathbf{C}}
\newcommand{\eps}{\varepsilon}
\newcommand{\ds}{\displaystyle}
\newcommand{\be}{\begin{equation}}
\newcommand{\ee}{\end{equation}}
\DeclareMathOperator{\supp}{supp}
\newcommand\lt{\left}
\newcommand\rt{\right}
\def\les{\lesssim}
\def\ges{\gtrsim}
\newcommand{\cF}{\mathcal{F}}
\newcommand{\cB}{\mathcal{B}}
\newcommand{\cK}{\mathcal{K}}
\def \C{\mathbf{C}}
\def\EE{\mathbb{E}}
\def\PP{\mathbb{P}}
\def\diam{\operatorname{diam}}
\newcommand{\bip}{\operatorname{Bip}}
\newcommand{\cN}{\mathcal{N}}
\newcommand{\cM}{\mathcal{M}}
\newcommand{\cS}{\mathcal{S}}
\renewcommand{\c}{\mathsf{c}}
\newcommand{\bra}[1]{\left( #1 \right)}
\newcommand{\sqa}[1]{\left[ #1 \right]}
\newcommand{\cur}[1]{\left\{ #1 \right\}}
\newcommand{\abs}[1]{\left| #1 \right|}
\newcommand{\nor}[1]{\left\| #1 \right\|}
\def\fref{f}
\newcommand{\TSP}{\operatorname{TSP}}
\renewcommand{\C}{\mathcal{C}}
\newcommand{\pP}{\mathsf{P}}
\newcommand{\CPp}{\C_{\pP}^p}
\renewcommand{\TSP}{\mathsf{TSP}_\alpha}
\newcommand{\dist}{\mathsf{d}}
\newcommand{\Mp}{\mathsf{M}_\alpha}
\newcommand{\Mphalf}{\mathsf{M}_{1/2}}
\newcommand{\W}{\mathsf{W}_q}
\newcommand{\KY}{\mathsf{W}_\alpha}
\newcommand{\cT}{\mathcal{T}}
\newcommand{\cG}{\mathcal{G}}
\newcommand{\Lip}{\operatorname{Lip}}
\newcommand{\x}{{\bf x}}
\newcommand{\y}{{\bf y}}
\newcommand{\z}{{\bf z}}
\newcommand{\cR}{\mathcal{R}}
\newcommand{\cU}{\mathcal{U}}
\newcommand{\cV}{\mathcal{V}}
\newcommand{\cQ}{\mathcal{Q}}
\newcommand{\BV}{BV}
\newcommand{\cspan}{\mathsf{c}_{\operatorname{A2}}}
\newcommand{\cbdeg}{\mathsf{c}_{\operatorname{A3}}}
\newcommand{\cmerge}{\mathsf{c}_{\operatorname{A4}}}
\newcommand{\creg}{\mathsf{c}_{\operatorname{A5}}}
\title{On the concave one-dimensional random assignment problem and Young integration theory}
\author[M. Goldman]{Michael Goldman}
\address{M.G.:   CMAP, CNRS, \'Ecole polytechnique, Institut Polytechnique de Paris, 91120 Palaiseau,
France}
\email{michael.goldman@cnrs.fr}
\author[D. Trevisan]{Dario Trevisan}
\address{D.T.: Dipartimento di Matematica, Università degli Studi di Pisa, 56125 Pisa, Italy  }
\email{dario.trevisan@unipi.it}
\date{}
\subjclass[2010]{60D05, 90C05, 39B62, 60F25, 35J05}
\keywords{matching problem, optimal transport, geometric probability, Young integral}
\thanks{D.T. was
partially supported by the INdAM-GNAMPA project 2023 ``Temi di Analisi Armonica
Subellittica''.}
\newcounter{proof-step}
\begin{document}

\maketitle

\begin{center}
\emph{Dedicated to L.\ Ambrosio on its 60th birthday.}
\end{center}

\begin{abstract}
We investigate the one-dimensional random assignment problem in the concave case, i.e., the assignment cost is a concave power function, with exponent $0<p<1$, of the distance between $n$ source and $n$ target points, that are i.i.d.\ random variables with a common law on an interval. 
We prove that the limit of a suitable renormalization of the costs exists if the exponent $p$ is different than $1/2$. Our proof in the case $1/2<p<1$ makes use of a novel version of the Kantorovich optimal transport problem based on Young integration theory, where the difference between two measures is replaced by the weak derivative of a function with finite $q$-variation, which may be of independent interest. We also prove a similar result for the random bipartite Traveling Salesperson Problem.
\end{abstract}
\newcommand{\I}{I}
\newcommand{\cX}{\mathcal{X}}
\newcommand{\var}{\operatorname{var}}
\renewcommand{\PP}{\mathbb{P}}

\section{Introduction}

The assignment problem (or bipartite matching) is a classic optimization problem that arises in a variety of applications. The task is to  find an optimal correspondence  between two sets of objects $(x_i)_{i=1}^n$, $(y_i)_{i=1}^n$, such as workers and jobs, or producers and sellers of goods, such that the total cost optimized, e.g.\ minimized. 
 When the cost $c(x_i,y_j)$ between pairs of objects is a function of a distance, the solution naturally reflects some of the geometry of the underlying space. Already in the case of points on the line, it is well-known that a convex function favours monotone assignments, while a concave function, such as $c(x,y) = |x-y|^\alpha$ with $\alpha \in (0,1)$ yields a richer structure,  exhibiting a variety of hierarchies at different scales, with a compelling economic interpretation \cite{mccann1999exact}.

There is a rich literature studying random instances of combinatorial optimization problems in Euclidean spaces, stemming from the seminal paper \cite{beardwood1959shortest}. The main focus is on convergence results and concentration around the typical behavior for large instances of the problems, see e.g.\ the monographs \cite{yukich2006probability, steele1997probability}. When the problem is bipartite, i.e., formulated over  two random sets of points, such as the assignment problem, the classical methods encounter some limitations, due to local fluctuations of the number of samples in the two families \cite{BoutMar, BaBo}. 
The same fluctuations give rise to unexpected scaling behaviors of the costs, which tend to be asymptotically larger than their non-bipartite counterparts. This phenomenon was crucially observed in \cite{AKT84} for the assignment problem on the square, with the cost given by the  Euclidean distance. 

In recent years, progress has been made in the study of random Euclidean combinatorial problems, starting from the assignment problem, thanks to its link with Optimal Transport theory, which constitutes its natural linear programming relaxation and can be formulated for general source and target measures, not necessarily discrete ones. Thanks to the rich and much explored structure of optimal transport problems and their solutions, in particular for  absolutely continuous densities, several results have been obtained in the study of the random Euclidean bipartite matching problem \cite{CaLuPaSi14, caracciolo2015scaling, AmGlau, AmStTr16, AGT19, benedetto2021random, ambrosio2022quadratic, goldman2021quantitative, goldman2022fluctuation, } but also for other random bipartite combinatorial optimization problems, e.g.\ \cite{capelli2018exact, correddu2021minimum, goldman2022optimal}.

Aim of this paper is to provide a description of the asymptotic behaviour of the cost functional associated to the assignment problem over two sets of random i.i.d.\ points $(X_i)_{i=1}^n$, $(Y_i)_{i=1}^n \subseteq \R$, with respect to a concave power cost. We introduce the optimal assignment cost functional
\[ \Mp( (X_i)_{i=1}^n, (Y_i)_{i=1}^n) = \min_{\sigma \in\cS_n} \sum_{i=1}^n | X_i -  Y_{\sigma(i)} |^\alpha,\]
where $\cS_n$ denotes the set of permutations over $n$ elements and the exponent $\alpha \in (0,1)$ is fixed.  Assuming that the common law $\mu$ is supported on a bounded interval $\I$, the heuristics is that the points are equally spread over $\I$, hence at distance roughly $1/n$ from each other. Thus, we expect that $\Mp((X_i)_{i=1}^n, (Y_i)_{i=1}^n)$ should approximately behave like $n^{1-\alpha}$. This is true in the range $0<\alpha<1/2$, and upper and lower bounds for the renormalized cost $$ n^{\alpha-1}\Mp((X_i)_{i=1}^n, (Y_i)_{i=1}^n)$$ are established e.g.\ in \cite[Theorem 2]{BaBo}. However, to our knowledge, existence of the limit is proved only when the points are uniformly distributed over a set with positive Lebesgue measure (again e.g.\ in \cite[Theorem 2]{BaBo}). Of course, a similar result is putatively assumed to hold also for non-uniform distributions, and in this work we precisely settle such conjecture. 

In the range $1/2<\alpha<1$, local fluctuations in the number of points become dominant and one obtains an asymptotic rate of the order $n^{1/2}$ (actually, the same rate holds also for $\alpha\ge 1$). Such ``phase transition'' was investigated in e.g.\ \cite{caracciolo2020dyck, bobkov2020transport}, where upper bounds have been rigorously established. In this work we are able to give a precise description of the limit also in this case. The main idea in the proof is that after re-scaling by the order $n^{1/2}$, the matching problem converges to a suitable version of an optimal transport problem, where the two source and target measure are now replaced by a Brownian bridge, because of the Central Limit Theorem. Elaborating upon this idea, we propose an extension of the Kantorovich problem by relying upon Young's integration theory \cite{young1936inequality},  which yields in our case a robust description of the assignment problem in the limit $n \to \infty$, but we believe may be of independent interest.

The case $\alpha=1/2$ cannot be settled by our arguments, hence existence of the limit remains an open question. However, we complement the upper bounds from \cite{bobkov2020transport} with an asymptotic lower bound using a standard space-filling curve argument, showing indeed that the correct asymptotic rate is of the order $\sqrt{n \log n}$.

\subsection{Main result}

With the notation introduced above, we are in a position to state our main result concerning the asymptotic behaviour of the random assignment problem with respect to a concave power of the distance, on the real line.

\begin{theorem}\label{thm:main}
Let $(X_i)_{i=1}^\infty$, $(Y_i)_{i=1}^\infty \subseteq \R$ be i.i.d.\ random variables with common law $\mu$. Denote with $f$ the absolutely continuous part of $\mu$ (with respect to Lebesgue measure) and $F(t) = \mu((-\infty, t])$ the cumulative distribution of $\mu$.
\begin{enumerate}
\item If $\alpha \in (1/2, 1)$ and $\mu$ is supported on a bounded interval, then convergence in law holds:
\begin{equation}\label{eq:mp-limit-alpha-large}  \lim_{n \to \infty} n^{-1/2} \Mp( (X_i)_{i=1}^n, (Y_i)_{i=1}^n ) \to  \| \sqrt{2} B \circ F \|_{\KY},\end{equation}
where $(B(t))_{t \in [0,1]}$ denotes a standard Brownian bridge process. 
\item If $\alpha\in (0,1/2)$ and $\int_{\R} |x|^{\beta} d \mu < \infty$ for some $\beta >4 \alpha/(1-2 \alpha)$, then complete convergence  holds:
\[ \lim_{n \to \infty} n^{\alpha-1} \Mp( (X_i)_{i=1}^n, (Y_i)_{i=1}^n ) = c(\Mp) \int_\I f^{1-\alpha}(t) dt,\]
where $c(\Mp) \in (0,\infty)$ is a constant depending on $\alpha$ only. 
\end{enumerate}
In both cases, convergence holds also in expectation.

\end{theorem}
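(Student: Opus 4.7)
The two regimes are attacked by substantially different techniques, driven by the fact that in case (1) the dominant contribution comes from CLT-type density fluctuations at scale $\sqrt{n}$, whereas in case (2) it comes from the local near-neighbour geometry at scale $n^{1-\alpha}$.

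\textbf{Case (1): $\alpha\in(1/2,1)$.} The plan is to reformulate $n^{-1/2}\Mp$ as a Young--Kantorovich-type functional of the empirical cumulative distribution function difference, and then apply Donsker's theorem together with a continuous mapping argument. First, I would exploit the one-dimensional structure: for concave costs, the optimal matching cost admits a representation involving $F_n^X - F_n^Y$, where $F_n^X(t)=n^{-1}\sum_i \mathbf{1}_{X_i\leq t}$, which should be identified (up to lower order terms) with an instance of the $\|\cdot\|_{\KY}$ norm introduced earlier in the paper through Young integration. Next, substituting $u=F(t)$ to uniformize the marginals, the renormalized difference
\[ n^{1/2}\bigl(F_n^X(F^{-1}(u)) - F_n^Y(F^{-1}(u))\bigr) \]
converges in law, by applying Donsker's theorem independently to the two samples, to $\sqrt{2}\,B(u)$ (the factor $\sqrt{2}$ coming from the independence of the two Brownian bridges), in the supremum norm and in $q$-variation for any $q>2$. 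Conjugating back by $F$ produces the candidate limit $\sqrt{2}\,B\circ F$. The conclusion then follows by continuity of $\|\cdot\|_{\KY}$ with respect to the $q$-variation topology for some $q\in(1/\alpha,2)$, which is precisely the range that opens up when $\alpha>1/2$.

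\textbf{Case (2): $\alpha\in(0,1/2)$.} Here I would follow the classical subadditive/patching approach of geometric probability, bootstrapping from the uniform case established in \cite{BaBo}. Partition a large bounded region $[-R,R]$ into many sub-intervals $I_j$ on which $f\approx f_j$ is essentially constant. Conditional on the number $N_j^X,N_j^Y$ of sample points in $I_j$, the internal matchings can be analysed by the uniform-case result, giving $\Mp\bigr|_{I_j}\approx c(\Mp)(N_j^X)^{1-\alpha}|I_j|^\alpha$; summing and using $N_j^X\approx n f_j|I_j|$ yields
\[ n^{\alpha-1}\Mp \;\approx\; c(\Mp)\sum_j f_j^{1-\alpha}|I_j| \;\longrightarrow\; c(\Mp)\int_\I f^{1-\alpha}(t)\,dt. \]
Cross-interval contributions are negligible thanks to the subadditivity of $x\mapsto x^\alpha$ and a simple pigeonhole argument. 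Points outside $[-R,R]$ are handled via Markov's inequality and the moment hypothesis $\int|x|^\beta d\mu<\infty$; the exponent $\beta>4\alpha/(1-2\alpha)$ is exactly what ensures the tail cost is $o(n^{1-\alpha})$ after optimising $R=R(n)$. Complete convergence follows from a concentration inequality of McDiarmid type (changing one sample point alters $\Mp$ by at most a bounded amount in the $\alpha$-Hölder sense) combined with Borel--Cantelli. Convergence in expectation in both cases is then obtained via a uniform integrability argument using the deterministic bound $\Mp\leq n\,\diam(\supp\mu)^\alpha$ in the compactly supported case, or moment truncation in case (2).

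\textbf{Main obstacle.} The heart of the argument is case (1): rigorously identifying the rescaled combinatorial cost with a functional computable on the Young--Kantorovich side, and establishing a sufficiently strong continuity property of $\|\cdot\|_{\KY}$ to apply the continuous mapping theorem --- nontrivial because $q$-variation is generically only lower semicontinuous. Case (2), while technically laborious (concentration, tails, patching), is conceptually standard once the uniform baseline is granted.
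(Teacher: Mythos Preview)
Your Case (1) outline is essentially the paper's approach, with two refinements. First, the admissible range is $q\in(2,1/(1-\alpha))$, not $(1/\alpha,2)$: the Brownian bridge has finite $q$-variation only for $q>2$, while Young integration needs $\alpha+1/q>1$. Second, the continuity obstacle you correctly flag is not handled by a continuous-mapping argument. The paper instead invokes the quantitative KMT-type coupling of Huang--Dudley \eqref{eq:dudley-q}: for each $n$ one builds a Brownian bridge $B_n$ on the same probability space with $\EE\big[[\sqrt{n}(F_n-\tilde F_n)-\sqrt{2}\,B_n\circ F]_{q-\var}^p\big]\to 0$, and the Lipschitz stability \eqref{eq:kantorovich-young-stability} of $\|\cdot\|_{\KY}$ in $q$-variation then yields $L^p$-convergence (hence convergence in law and in expectation) directly, bypassing any semicontinuity issue.

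Case (2) has a genuine gap. Your patching scheme produces the upper bound, but the sentence ``cross-interval contributions are negligible thanks to subadditivity of $x\mapsto x^\alpha$ and a simple pigeonhole argument'' does not deliver a matching lower bound: nothing in the sketch prevents the global optimum from routing many edges across cell boundaries, and subadditivity points the wrong way here. This is exactly the difficulty the paper isolates. The Barthe--Bordenave machinery already reduces the general-density statement (including complete convergence and the tail/moment handling you describe) to the uniform case on $[0,1]$, \emph{modulo} the previously open question of whether the boundary functional $\Mp^D$---in which one may freely add matched pairs at $\{0,1\}$---has the same asymptotic constant as $\Mp$. The paper's new input is \cref{lem:dirichlet-neumann-deterministic}: the no-crossing rule for optimal concave matchings forces the number $m$ of added boundary pairs in any optimal $\Mp^D$-configuration to satisfy $m\le [F-\tilde F]_{C^0}=O_{\PP}(\sqrt{n})$. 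Combined with \cref{lem:remove-points}, this gives $\Mp-\Mp^D\le m$, hence $n^{\alpha-1}(\Mp-\Mp^D)=O_{\PP}(n^{\alpha-1/2})\to 0$ for $\alpha<1/2$, which identifies the two limits. Your proposal does not engage with this mechanism, and without it the lower bound is unproven.
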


%

We also have an analogue theorem for the Travelling Salesperson Problem (TSP), which reads almost the same as \cref{thm:main}, with the cost $\TSP$ (defined in  \ref{sec:tsp}) in place of $\Mp$, and an additional factor $2$ in the right hand side of \eqref{eq:mp-limit-alpha-large}, and a different constant  $c(\TSP)$ instead of $c(\Mp)$. See \cref{thm:main-tsp} for a precise statement.

\subsection{Comments on the proof technique} As already mentioned, one of the main novelties is the formulation of a suitable Kantorovich-Young problem. In brief, for every function $g: \I \to \R$ over a bounded interval $\I$, with finite $q$-variation, and $\alpha \in (0,1)$ such that $\alpha+1/q>1$, we define
\[  \|g \|_{\KY} = \sup \cur{ \int_\I f d g \, : \,  \| f \|_{C^\alpha} \le 1},\]
where the integral is understood in the sense of Young.  Such a Kantorovich-Young problem  recovers the classical optimal transport when applied to functions $g$ with bounded variation (corresponding to  corresponds to the case of transporting two measures).  Indeed, in \cref{sec:ot} we investigate some basic properties of this problem, some of which are not strictly necessary in the proof of \cref{thm:main}. 

In the random case, we have that $g(t) = B(t)$ is a Brownian bridge, which has finite $q$-variation only if $q >2$, hence the Kantorovich-Young problem will be meaningful only if $\alpha>1/2$. This may also provide a explanation of the occurrence of the ``phase transition'' at $\alpha=1/2$, which in stochastic analysis motivates the introduction of Rough Paths theory \cite{friz2010multidimensional, friz2020course}.

Thus, for $\alpha<1/2$ we need to argue in a different way, and we do so by exploiting the so-called boundary functional associated to the assignment problem, where one is allowed to assign arbitrary points from/to the boundary of a given interval (for simplicity, we work in the case $\I = [0,1]$). The use of such functional is standard in the  theory of (non-bipartite) random combinatorial  optimization problems \cite{steele1997probability, yukich2006probability}, and is also advocated in \cite{BaBo}. It is however a widely open question to determine whether the asymptotic cost of the boundary functional coincides with that of the original assignment problem. In the present case of a concave cost on the interval, we are able to answer affirmatively to this question, by exploiting the specific structure of optimal assignments (in particular, we use the no-crossing property).

\subsection{Further questions}
Our result settles several questions  about existence of the limit for the renormalized costs of random bipartite assignment problem in the concave case on the line. Let us mention some problems that seem worth exploring.
\begin{enumerate}
\item For $1/2<\alpha <1$, \cref{thm:main} is limited to laws with bounded support. This is because we develop a minimal theory for the Kantorovich-Young problem, valid only in the case of a bounded interval. As with the classical transport problem, to address the case of unbounded intervals, some growth condition should be imposed.  In turn, this condition should then be verified in the convergence of the empirical process towards the Brownian bridge, hence extending the results from \cite{huang2001speed} which we employ in our argument. 
\item  A natural question is what happens in the $\alpha = 1/2$ case. It is known \cite[Corollary 3.2]{bobkov2020transport} that for i.i.d.\ points $(X_i)_{i=1}^\infty$, $(Y_i)_{i=1}^\infty \subseteq \I$ with common law $\mu$,
\begin{equation} \label{eq:bobkov-ledoux} \limsup_{n \to \infty} \EE\sqa{ \Mphalf( (X_i)_{i=1}^n, (Y_i)_{i=1}^n ) }/ \sqrt{ n \log n}  < \infty,\end{equation}
hence a natural conjecture would be that
\[ \lim_{n \to \infty} \EE\sqa{ \Mphalf( (X_i)_{i=1}^n, (Y_i)_{i=1}^n ) }/ \sqrt{ n \log n}  \]
always exists. In \cref{rem:alpha-1-2} below, we prove that if $\mu$ is  uniform on $\I$,  then
\begin{equation}\label{eq:lower-bound-half} \liminf_{n \to \infty}\EE\sqa{ \Mphalf( (X_i)_{i=1}^n, (Y_i)_{i=1}^n ) }/ \sqrt{ n \log n}   >0.\end{equation}
\item Our method is clearly not limited to the assignment problem, as the application to the TSP shows. We conjecture that the connected bipartite $\kappa$-factor problem, see  \cite{BaBo,  goldman2022optimal}, should allow for a similar analysis (the case $\kappa=2$ yields the TSP).

\end{enumerate}

\subsection{Structure of the paper} In \cref{sec:notation} we introduce the notation and collect some known facts on H\"older functions, $q$-variation,  optimal transport theory and the convergence of empirical processes towards Brownian bridges. In \cref{sec:assign}, we recall some simple properties of the assignment problem and the TSP in the concave case. We also establish a fundamental bound, \cref{lem:dirichlet-neumann-deterministic}, relating the boundary functional to the original problem. In \cref{sec:ot}, we introduce the Kantorovich-Young problem, and provide a duality result (\cref{prop:duality}) in terms of a suitable minimization of a transport cost over a set of couplings. Finally, in \cref{sec:main}, we  prove \cref{thm:main} and its variant for the TSP, \cref{thm:main-tsp}.

\subsection{Acknowledgements} D.T.\ thanks S.\ Caracciolo, A.\ Sportiello and M.\ D'Achille for valuable discussions about the statistical physics perspective on the problem.

\section{Notation and basic facts}\label{sec:notation}


We write throughout $\I$ for an interval $\I = [a,b] \subseteq \R$ (not necessarily bounded) with length $|I|$. To avoid measure-theoretic issues, we tacitly assume that all the functions $f: \I \to \R$ are right-continuous.

\subsection{H\"older functions}
Given a function $f: \I \to \R$, we define its H\"older semi-norm of exponent $\alpha \in (0,1]$, as
\[ \sqa{ f }_{C^\alpha} = \sup_{s\neq t \in \I} \frac{ \abs{ f(t) - f(s)}}{|t-s|^\alpha} \in [0, \infty].\]
Notice that $\sqa{ f }_{C^1}$ denotes the Lipschitz constant of $f$ (and not the usual $C^1$ norm). For $\alpha = 0$, we set $\sqa{f}_{C^0} = \sup_{s, t \in \I} \abs{f(t)-f(s)}$, the oscillation of $f$. To turns these quantities into norms, we define 
\[ \|f\|_{C^{\alpha}} = |f(c)| + [f]_{C^\alpha},\]
for a chosen $c \in \I$ (the precise choice is not relevant). We write as usual $C^{\alpha}(\I)$ for the Banach space of functions $f$ with finite norm $\|f\|_{C^\alpha} < \infty$. 
For $0<\beta<\alpha\le 1$, the inclusion $C^{\alpha}(\I) \subseteq C^\beta( \I)$ holds and, by Ascoli-Arzelà theorem, if $\I$ is bounded, the inclusion is also compact, i.e., for any bounded sequence $(f_n)_{n} \subseteq C^\beta(\I)$, one can extract converging a converging subsequence $(f_{n_k})_k$ towards some $f$, i.e., 
\[ \lim_{k \to \infty} \nor{ f_{n_k} - f}_{C^\alpha} = 0.\]
In particular, one also has pointwise convergence and $f \in C^{\beta}(\I)$ with
\[ [f]_{C^\beta} \le \liminf_{k\to \infty} [f_{n_k}]_{C^\beta}.\]

\subsection{$p$-variation}
For $p \in [1, \infty)$,  we define the $p$-variation semi-norm of $f : I \to \R$ as
\[\sqa{ f}_{p-\var} = \sup \cur{ \bra{ \sum_{i=1}^m |f(t_i)-f(t_{i-1}) |^p}^{1/p} \, : \cur{t_i}_{i=0}^m \subseteq I,  \quad  t_0 < t_1 < \ldots < t_m}.\]
We may also set $\sqa{f}_{\infty-\var} = \sqa{f}_{C^0}$, so that, for any $\alpha \in [0,1]$, we have the inequality
\begin{equation}\label{eq:f-hol-var-bound} [ f ]_{1/\alpha -\var} \le |\I|^{\alpha} [ f]_{C^\alpha}.\end{equation}
Moreover,
\begin{equation}\label{eq:oscillation-variation} [f]_{C^0} \le [f]_{p-\var}\end{equation}
for every $p \ge 1$. The $p$-variation decreases with respect to composition with increasing functions: if $J \subseteq \R$ is an interval and $j: J \to I$ is increasing (not necessarily continuous), then
\begin{equation}\label{eq:p-var-decreases} [ f \circ j]_{p-\var} \le [f]_{p-\var}.\end{equation}
When $p=1$, the $1$-variation $[f]_{1-\var}$ is simply called the total variation of $f$, and functions with finite $1$-variation may be also called functions with bounded variation ($\BV(\I)$).  Given two finite Borel measures $\mu^+$, $\mu^{-}$ over $\I$, the function
\[ f(t) := \mu^+(  (-\infty, t] \cap I) -  \mu^{-}( (-\infty, t] \cap I)\]
has finite total variation and it is well-known that, up to an additive a constant, (and choosing an a.e.\ representative to ensure right-continuity) any $f \in \BV(\I)$ can be represented as above. 
%
%
%
%


 \subsection{Young integration}
Given $f$, $g: I \to \R$, we say that the Riemann-Stieltjes integral $\int_I f d g$ is well defined if  the following limit exists
\[ \lim \sum_{i=1}^m f(t_{i-1}) (g(t_{i}) - g(t_{i-1})) = \int_I f dg\]
along any sequence of partitions $\{t_i\}_{i=0}^m \subseteq I$,  such that its mesh $\sup_{i=1, \ldots, m} | t_i - t_{i-1} |$ is infinitesimal as $m \to \infty$, and the limit does not depend on the chosen partitions. The (Lebesgue-)Riemmann-Stieljes theory of integration ensures that $\int_I f dg$ exists if both $[f]_{C^0}$ and $[g]_{1-\var}$ are finite. L.C.\ Young \cite{young1936inequality} established the following result. 

\begin{theorem}[Young]\label{thm:young}
Let $p$, $q \ge 1$ be such that $1/p+ 1/q >1$. Then, the  Riemann-Stieltjes integral $\int_I f dg$ exists for every pair of functions $f$, $g: I \to \R$ such that $f$, $g$ have no common points of discontinuity and both $\sqa{f}_{p-\var}$ and $\sqa{g}_{q-\var}$ are finite. Moreover, if $I = [a,b]$, it holds
\begin{equation}\label{eq:young-bound} \abs{ \int_I f  d g   - f(a)(g(b) -g(a))} \le C(p,q)   \sqa{f}_{p-\var} \sqa{g}_{q-\var},\end{equation}
where $C(p,q)\in (0,\infty)$ is a constant depending on $p$ and $q$ only (not even upon $\I$).
\end{theorem}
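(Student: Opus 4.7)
My plan is to first establish the quantitative bound \eqref{eq:young-bound} via an iterative ``point-removal'' argument on partitions, and then derive the existence of $\int_\I f\,dg$ from it by a Cauchy argument exploiting the no-common-discontinuity hypothesis.

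For any partition $\pi = \{a = t_0 < \ldots < t_m = b\}$ of $[a,b]$ with $m \ge 2$, the effect of removing an inner node $t_j$ (for $1 \le j \le m-1$) on the Riemann--Stieltjes sum $S(\pi) := \sum_{i=1}^m f(t_{i-1})(g(t_i)-g(t_{i-1}))$ is exactly $(f(t_j) - f(t_{j-1}))(g(t_{j+1}) - g(t_j))$. Combining the bounds $\sum_{j=1}^{m-1} |f(t_j) - f(t_{j-1})|^p \le \sqa{f}_{p-\var}^p$ and $\sum_{j=1}^{m-1} |g(t_{j+1}) - g(t_j)|^q \le \sqa{g}_{q-\var}^q$, after normalizing and summing, a pigeonhole argument yields some $j$ with
\[
\frac{|f(t_j)-f(t_{j-1})|^p}{\sqa{f}_{p-\var}^p} + \frac{|g(t_{j+1})-g(t_j)|^q}{\sqa{g}_{q-\var}^q} \le \frac{2}{m-1},
\]
so that, setting $\theta := 1/p + 1/q > 1$, the removal has cost at most
\[
|f(t_j) - f(t_{j-1})|\cdot|g(t_{j+1}) - g(t_j)| \le \left(\tfrac{2}{m-1}\right)^{\theta} \sqa{f}_{p-\var}\sqa{g}_{q-\var}.
\]
Iterating the removal until only $\{a,b\}$ remains---at which point $S = f(a)(g(b)-g(a))$, noting that the $p$- and $q$-variations on each intermediate partition never exceed the full ones---and summing the costs produces $\sum_{k=1}^{m-1} (2/k)^\theta \le 2^\theta \zeta(\theta)$, which is finite precisely because $\theta > 1$, giving \eqref{eq:young-bound} with $C(p,q) := 2^\theta \zeta(\theta)$.

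For existence, applying the just-derived bound on each subinterval of a partition $\pi$ shows that any refinement $\pi' \supseteq \pi$ satisfies
\[
|S(\pi') - S(\pi)| \le C(p,q) \sum_{[u,v] \in \pi} \sqa{f}_{p-\var;[u,v]}\,\sqa{g}_{q-\var;[u,v]}.
\]
Introducing the super-additive control functions $\omega_f(s,t) := \sqa{f}_{p-\var;[s,t]}^p$ and $\omega_g(s,t) := \sqa{g}_{q-\var;[s,t]}^q$, the assumption that $f, g$ share no points of discontinuity implies, via a standard compactness argument on $[a,b]$, that for every $\eps > 0$ one can choose a partition $\sigma$ whose every subinterval $J$ satisfies either $\omega_f(J) < \eps$ or $\omega_g(J) < \eps$. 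Splitting any refinement $\pi' \supseteq \sigma$ accordingly and applying H\"older's inequality with conjugate exponents $(p, p')$ on ``$\omega_f$-small'' pieces (and symmetrically $(q, q')$ on ``$\omega_g$-small'' pieces), together with the super-additivity-based bound $\bra{\sum \sqa{g}_{q-\var;[u,v]}^{p'}}^{1/p'} \le \sqa{g}_{q-\var;J}$ that holds precisely because $p' > q \Leftrightarrow \theta > 1$, shows that the above sum is $\mathcal{O}(\eps^{1/p} + \eps^{1/q})$ uniformly in $\pi' \supseteq \sigma$; Cauchy convergence of $S(\pi)$ along partitions of vanishing mesh then follows by taking common refinements. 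I expect the main obstacle to be this last step, which requires jointly exploiting the no-common-discontinuity hypothesis, the super-additivity of $\omega_f, \omega_g$, and the condition $\theta > 1$, in order to convert pointwise control of at most one of the two control functions into a uniform bound on the sum of products of sub-variations over arbitrarily fine partitions.
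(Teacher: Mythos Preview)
The paper does not prove this theorem: it is stated with attribution to Young \cite{young1936inequality} and used as a black box. Your point-removal argument for the a priori bound (Step~1) is precisely Young's original approach and is correctly executed; the constant $C(p,q)=2^{\theta}\zeta(\theta)$ with $\theta=1/p+1/q$ is sharp for this method.

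There is, however, a genuine gap in your existence argument. Your H\"older step with conjugate exponents $(p,p')$ on the ``$\omega_f$-small'' pieces gives only
\[
\sum_{J:\,\omega_f(J)<\eps} [f]_{p\text{-}\var;J}\,[g]_{q\text{-}\var;J}
\le \Bigl(\sum_J \omega_f(J)\Bigr)^{1/p}\Bigl(\sum_J [g]_J^{p'}\Bigr)^{1/p'}
\le [f]_{p\text{-}\var}\,[g]_{q\text{-}\var},
\]
which is the trivial bound with no $\eps$-dependence: super-additivity controls $\sum_J\omega_f(J)\le\omega_f(I)$ but says nothing about $\sum_{J:\,\omega_f(J)<\eps}\omega_f(J)$ being $O(\eps)$, since there may be arbitrarily many such pieces. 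The fix is to first extract the smallness: write $\omega_f(J)^{1/p}=\omega_f(J)^{\theta-1}\cdot\omega_f(J)^{1/q'}$, bound $\omega_f(J)^{\theta-1}<\eps^{\theta-1}$, and then apply H\"older with exponents $(q',q)$ to the remaining sum $\sum_J \omega_f(J)^{1/q'}\omega_g(J)^{1/q}$. This yields the correct estimate $O(\eps^{\theta-1})$, not $O(\eps^{1/p}+\eps^{1/q})$. A secondary point: for the Cauchy argument via common refinements you need the smallness property not just for one specific $\sigma$ but for \emph{every} partition of sufficiently small mesh; this follows from the Lebesgue number lemma applied to your compactness cover, and should be stated explicitly. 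The compactness step itself (that continuity of $f$ at $c$ together with $[f]_{p\text{-}\var}<\infty$ forces $\omega_f(c-\delta,c+\delta)\to0$) is correct but nontrivial and deserves a line of justification via the monotonicity of $t\mapsto\omega_f(a,t)$.
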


We are going to use the above construction in the special case of $f$ being H\"older continuous of exponent $\alpha = 1/p >1-1/q$, in particular continuous and of finite $p$-variation by \eqref{eq:f-hol-var-bound}, and $g$ with $g(a) = g(b)$ (if $I = [a,b]$). In such a case,  the Riemann-Stieltjes integral $\int_I f d g$ exists and, combining \eqref{eq:f-hol-var-bound} with  \eqref{eq:young-bound}, we have the inequality
\begin{equation}\label{eq:young-bound} \abs{ \int_I f  d g } \le C(p,q) | \I |^\alpha \| f\|_{C^{\alpha}} \nor{g}_{q-\var}.\end{equation}

\subsection{Wasserstein distance}
Given $q>0$ and positive Borel measures $\mu$, $\lambda$ on a Polish space $(\mathcal{X},\dist)$ with $\mu(\mathcal{X}) = \lambda(\mathcal{X}) \in (0, \infty)$ and finite $q$-th moments, i.e.,
\[ \int_{\cX} \dist(x,x_0)^q \mu(d x) + \int_{\cX}\dist(x,x_0)^q \lambda(d x)<\infty\]
for some (hence all) $x_0 \in \cX$, the optimal transport cost of order $q$ between $\mu$ and $\lambda$ is defined as the quantity
\begin{equation}\label{eq:wkz} \inf_{\pi\in\Gamma(\mu,\lambda)} \int_{\cX\times\cX} \dist(x,y)^q  \pi(d x,d y),\end{equation}
where $\Gamma(\mu, \lambda)$ is the set of couplings between $\mu$ and $\lambda$, i.e., finite Borel measures $\pi$ on the product $\cX \times \cX$ such that their marginals are respectively $\mu$ and $\lambda$. A simple compactness argument yields that the infimum in \eqref{eq:wkz} is always a minimum. Moreover, if $q \in (0,1]$, then \eqref{eq:wkz} actually defines a distance, while if $q \ge 1$ one needs to take the $q$-th root in \eqref{eq:wkz} to obtain a distance. With this convention, for every $q >0$, one defines the Wasserstein distance of order $q$ between $\mu$ and $\lambda$, which we denote by $\W(\mu, \lambda)$. Convergence with respect to $\W$ is easily characterized \cite{AGS}: for a given $q>0$, a sequence of measures $(\mu_n)_{n}$ with finite $q$-th moment converge towards $\mu$ with respect to the distance $\W$ if and only if $\mu_n \to \mu$ weakly, i.e., 
\[ \lim_{n \to \infty} \int_{\cX} f d \mu_n  = \int_\cX f d \mu \quad \text{for every $f: \cX \to \R$ bounded and continuous,}\]
and for some (hence all) $x_0 \in \cX$,
\[ \lim_{n \to \infty} \int_{\cX} \dist(x,x_0)^q  \mu_n(dx)   =  \int_{\cX} \dist(x,x_0)^q  \mu (dx).\] 

The optimal transport cost of any order $q$ admits a dual formulation. We are going to use only the case $q \in (0,1]$, which is particularly simple:
\begin{equation}\label{eq:W-duality} \W(\mu, \nu) = \sup_f\cur{ \int_\cX f \bra{ d\mu - d \nu} \, : \,  |f(x) - f(y)| \le \dist(x,y)^q \quad \forall x,y \in \cX}.\end{equation}

\subsection{Empirical processes} \label{sec:empirical} Let $(X_i)_{i=1}^\infty$ be i.i.d.\ real valued random variables with common law $\mu$ and cumulative distribution function
\[ F(t) = \mu( (-\infty, t] ), \quad \text{for $t \in \R$.}\]
For every $n \ge 1$, denote with $\mu_n = \frac 1 n \sum_{i=1}^n \delta_{X_i}$ the empirical process, whose cumulative distribution function is
\[ F_n(t) = \mu_n( (-\infty, t] ) = \frac 1 n \sum_{i=1}^n 1_{\cur{X_i \le t}}, \quad \text{for $t \in \R$.}\]
By the Central Limit Theorem,  $\sqrt{n}(F_n(t)-F(t))$ converges in law to a Gaussian process. Precisely, define a  Brownian bridge $(B(t))_{t \in [0,1]}$ as a continuous Gaussian process  $\EE\sqa{B(t)} =0$ and $\EE\sqa{ B(s)B(t) } =  \min\cur{s,t} - st$ for every $s, t \in [0,1]$. Then, one can approximate $\sqrt{n}(F_n(t)-F(t))$ with $B \circ F(t) $ in law. In \cite{huang2001speed}, extending a classical construction of Komlós, Major and Tusnády \cite{komlos1975approximation}, the convergence is made quantitative by providing, for every $q\in (2, \infty)$ and $n \ge 1$, a coupling between the process $\sqrt{n}(F_n-F)$ and a Brownian bridge $B_n$, such that
\begin{equation}\label{eq:dudley-huang}  \PP \bra{ [ \sqrt{n}(F_n-F) - B_n \circ F ]_{p-\var} >  x n^{-(q-2)/(2q)  }}  \le e^{-x/A(q)}, \quad \text{for every $x\ge q A(q)$,} \end{equation}
where $A(q) \in (0, \infty)$ denotes a constant depending on $q$ only.  This implies, for every $y \ge 1$, convergence in $L^p(\PP)$ of the $q$-variation:
\[ \EE\sqa{ [ \sqrt{n}(F_n-F) - B_n \circ F ]_{q-\var}^p }^{1/p} \le C(p,q) n^{-(q-2)/(2q)}.\]


%
Consider an additional family $(Y_i)_{i=1}^\infty$ of i.i.d.\ values with the same law, and independent of the family $(X_i)_{i=1}^\infty$. Denote with $\tilde{F}_n$ the associated empirical cumulative distribution function associated to the points $(Y_i)_{i=1}^n$, and $\tilde{B}_n$ a Brownian bridge coupled with $\sqrt{n}(\tilde{F}_n - F)$ such that the analogue of  \eqref{eq:dudley-huang} holds. Without loss of generality, we may assume that $B$ and $\tilde{B}$ are independent. Then,
\[ \operatorname{Law}(B_n \circ F - \tilde{B}_n \circ F)  = \operatorname{Law}(\sqrt{2} B_n \circ F)\]
and using the triangle inequality we obtain that, for every $p\ge 1$ and some constant $C(p,q)>0$,
\begin{equation}\label{eq:dudley-q} \EE\sqa{ [ \sqrt{n}(F_n-\tilde{F}_n) - \sqrt{2} B_n \circ F ]_{q-\var}^p  }^{1/p} \le C(p,q) n^{-(q-2)/(2q)} \end{equation}
for a suitable Brownian bridge process $B_n$.

\section{Assignment and other combinatorial optimization problems}\label{sec:assign}

\subsection{The assignment problem}
Given two families of points $(x_i)_{i=1}^n, (y_i)_{i=1}^n \subseteq \R$ the assignment (or bipartite matching) cost on the line, with exponent $\alpha \in (0,1]$, is defined as
\begin{equation}\label{eq:mp} \Mp( (x_i)_{i=1}^n, (y_i)_{i=1}^n ) = \min_{\sigma \in \cS_n} \sum_{i=1}^n | x_i - y_{\sigma(i)}|^\alpha,\end{equation}
where $\cS_n$ is the set of permutations over $\cur{1, \ldots, n}$. We call a minimizer $\sigma$ an optimal assignment between the points $(x_i)_{i=1}^n, (y_i)_{i=1}^n$.

By Birkhoff theorem, we have the identity
\begin{equation}\label{eq:birkhoff} \Mp( (x_i)_{i=1}^n, (y_i)_{i=1}^n ) =  \KY\bra{ \sum_{i=1}^n \delta_{x_i}, \sum_{i=1}^n \delta_{y_i}},\end{equation}
thus connecting the discrete combinatorial problem with a linear programming problem. 

Any optimal assignment $\sigma$ must satisfy the following condition (called monotonicity in the optimal transport literature)
\begin{equation}\label{eq:monotonicity-assignment} |x_i - y_{\sigma(i)}|^\alpha + |x_j - y_{\sigma(j)}|^\alpha \le |x_i - y_{\sigma(j)}|^\alpha + |x_j - y_{\sigma(i)}|^\alpha, \quad \text{for every $i$ and $j$,}\end{equation}
otherwise if the converse inequality hold for some $i$, $j$, one could get a strictly smaller cost by modifying $\sigma$ letting instead $i \mapsto \sigma(j)$, $j \mapsto \sigma(i)$.

Let us recall some simple consequences of \eqref{eq:monotonicity-assignment}. First, if $x_k = y_\ell$ for some $k$, $\ell$, then there exists an optimal assignment that maps $k$ into $\ell$. Indeed, let $\sigma$ be any optimal assignment and write \eqref{eq:monotonicity-assignment} with $i=k$ and $j = \sigma^{-1}(\ell)$, which becomes
\[  |y_{\sigma(k)} - x_k |^\alpha + | x_k - x_{ \sigma^{-1}(\ell) } |^\alpha \le |y_{\sigma(k)} - x_{ \sigma^{-1}(\ell) } |^\alpha,\]
i.e., a reverse triangle inequality. Therefore, we can modify $\sigma$ by mapping $k$ into $\ell$ and $\sigma^{-1}(\ell)$ into $\sigma(k)$ to obtain an assignment with the same cost, hence optimal.  

Next, we obtain the so-called no-crossing rule for optimal assignments. Given a permutation $\sigma \in \cS_n$ and $i,j \in \cur{1, \ldots, n}$,  the pair $(x_i, y_{\sigma(i)})$, $(x_j, y_{\sigma(j)})$ is said to be  crossing if the two  open intervals determined by the two pairs of points are neither disjoint, nor one includes the other.

\begin{lemma}[no-crossing rule]\label{lem:non-crossing-matching}
Let $\alpha \in (0,1)$. For any optimal assignment $\sigma$, no pair  $(x_i, y_{\sigma(i)})$, $(x_j, y_{\sigma(j)})$ is crossing, for $i,j \in \cur{1, \ldots, n}$. 
\end{lemma}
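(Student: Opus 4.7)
The plan is to assume a crossing pair exists and directly contradict the monotonicity condition \eqref{eq:monotonicity-assignment} by producing the strict reverse inequality, whose failure was already noted to improve the cost.

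Suppose $(x_i, y_{\sigma(i)})$ and $(x_j, y_{\sigma(j)})$ are crossing. Up to relabeling the indices $i,j$, the two open intervals determined by these pairs can be written as $(a_1,b_1)$ and $(a_2,b_2)$ with $a_1<a_2<b_1<b_2$. Setting $u = a_2-a_1 > 0$, $v = b_1 - a_2 > 0$, $w = b_2 - b_1 > 0$, I would distinguish two cases according to the orientation of each pair inside its interval:

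\textbf{Case 1 (both pairs oriented the same way).} Say $x_i = a_1$, $y_{\sigma(i)} = b_1$, $x_j = a_2$, $y_{\sigma(j)} = b_2$ (the case of both pairs reversed is symmetric). The key inequality to establish is
\[ (u+v)^\alpha + (v+w)^\alpha > (u+v+w)^\alpha + v^\alpha,\]
which is the strict version of the superadditive-increments property of the strictly concave map $t \mapsto t^\alpha$ on $(0,\infty)$: the function $h \mapsto (h+u)^\alpha - h^\alpha$ is strictly decreasing for $\alpha \in (0,1)$, and comparing its values at $h = v$ and $h = v+w$ yields exactly the displayed inequality. This is the reverse of \eqref{eq:monotonicity-assignment}, contradicting optimality of $\sigma$.

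\textbf{Case 2 (opposite orientations).} Say $x_i = a_1$, $y_{\sigma(i)} = b_1$, $x_j = b_2$, $y_{\sigma(j)} = a_2$ (or the symmetric relabeling). Then the original and swapped costs compare as
\[ (u+v)^\alpha + (v+w)^\alpha > u^\alpha + w^\alpha,\]
which is immediate from strict monotonicity of $t \mapsto t^\alpha$ on $[0,\infty)$ together with $u+v > u$ and $v+w > w$. Again this contradicts \eqref{eq:monotonicity-assignment}.

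The only potential obstacle is bookkeeping the case enumeration of which of $x_i,y_{\sigma(i)}$ sits at $a_1$ versus $b_1$ (and similarly for the other pair), but every sub-case reduces, after swapping the roles of $i$ and $j$ or reflecting the line, to one of the two above. The proof rests entirely on the strict concavity of $t \mapsto t^\alpha$ for $\alpha \in (0,1)$, which is precisely why the no-crossing conclusion fails in the convex case.
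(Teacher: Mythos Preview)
Your proof is correct and follows essentially the same strategy as the paper: assume a crossing pair and derive the strict reverse of the monotonicity inequality \eqref{eq:monotonicity-assignment} via strict concavity of $t\mapsto t^\alpha$. The paper treats only the configuration $x_i<x_j<y_{\sigma(i)}<y_{\sigma(j)}$ (your Case~1) and obtains the inequality by writing the two original lengths as convex combinations of the two swapped lengths, while you use the equivalent ``decreasing increments'' formulation $h\mapsto (h+u)^\alpha-h^\alpha$ is strictly decreasing; you also make the opposite-orientation Case~2 explicit, which the paper leaves to the reader.
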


For $\alpha=1$, the thesis holds for at least one optimal assignment. Let us report the argument for later use.

\begin{proof}
Assume by contradiction that a pair $(x_i, y_{\sigma(i)})$, $(x_j, y_{\sigma(j)})$ is crossing. There are actually several cases to consider, but without loss of generality we focus only on the cases that the points are in the order $x_i < x_j < y_{\sigma(i)} < y_{\sigma(j)}$. Set $a = y_{\sigma(i)} - x_j>0$, $b = y_{\sigma(j)} - x_i>0$, and let $t \in (0,1)$ be such that
\[ y_{\sigma(i)} - x_i = (1-t) a + t b.\]
Since $a+b = y_{\sigma(i)} - x_i + y_{\sigma(j)} - x_j$, it also holds with the same $t$ that
\[ y_{\sigma(j)} - x_j = t a + (1-t)b.\]
By strict concavity, 
\[\begin{split} \abs{y_{\sigma(i)} - x_i}^{\alpha} + \abs{ y_{\sigma(j)} - x_j}^{\alpha}  & > (1-t)a^\alpha+ t b^\alpha + ta^\alpha +(1-t)  b^\alpha \\
 & = a^\alpha + b^\alpha\\
 & = \abs{y_{\sigma(j)} - x_i}^\alpha + \abs{y_{\sigma(i)} - x_j}^\alpha, \end{split}\]
 which contradicts \eqref{eq:monotonicity-assignment}.
\end{proof}


\subsection{Travelling Salesperson Problem}\label{sec:tsp}

Some of our arguments apply with minor modifications to other combinatorial optimization problems. To keep the exposition simple, let us limit ourselves to  the bipartite Traveling Salesperson Problem (TSP), where one searches for the cheapest cycle visiting two given families of points $(x_i)_{i=1}^n, (y_i)_{i=1}^n \subseteq \R$ and alternating between them. For an exponent $\alpha \in (0,1]$ the cost is defined as
\begin{equation}\label{eq:tsp} \TSP( (x_i)_{i=1}^n, (y_i)_{i=1}^n ) = \min_{\sigma, \tau \in \cS_n} \sum_{i=1}^n | x_{\sigma(i)} - y_{\tau(i)}|^\alpha + |y_{\tau(i)} - x_{\sigma(i+1)}|^\alpha,\end{equation}
where conventionally we let $\sigma(n+1) = \sigma(1)$. Instead of dealing with permutations, we may consider an abstract cycle $G$ on the complete bipartite graph $\bip_n$ over two copies of $\cur{1,\ldots, n}$. It is not difficult that there is a correspondence between such cycles and the parametrizations given by 
\[ (\sigma, \tau) \mapsto G = \cur{ (\sigma(i), \tau(i)), (\sigma(i+1), \tau(i) )}_{i=1}^n.\]
%
Reasoning in terms of graphs and cycles simplifies some arguments. For example, let us consider the analogue of \eqref{eq:monotonicity-assignment} for this problem. 
Let $(\sigma, \tau)$ be an optimizer in \eqref{eq:tsp} with associated cycle $G \subseteq \bip_n$. Then, for $i,j, k, \ell \in \cur{1, \ldots, n}$, such that $(i,k)$, $(j, \ell) \in G$ and $(i,\ell)$, $(j, k) \notin G$. Then,
\begin{equation}\label{eq:monotonicity-kfactor} | x_{i} - y_{k}|^\alpha + | x_{j} - y_{\ell}|^\alpha \le | x_{i} - y_{\ell}|^\alpha + | x_{j} - y_{k}|^\alpha.\end{equation}
Otherwise, one could obtain a strictly smaller cost by removing $(i,k)$, $(j, \ell)$ from $G$ and adding instead the edges $(i,\ell)$, $(j, k)$, an operation that still yields a cycle.


We deduce  from this inequality a \emph{no-too-many-crossings} rule. Given points $(x_i)_{i=1}^n$, $(y_i)_{i=1}^n \subseteq \R$, we say that a pair of edges $(i,k)$, $(j, \ell) \in \bip_n$ is crossing (or alternatively, that the edge $(j,\ell)$ crosses $(i,k)$) if the open intervals with extremes respectively $\cur{x_i, y_k}$, $\cur{x_j, y_\ell}$ are neither disjoint nor one includes the other. 

\begin{lemma}\label{lem:non-crossing-k-factor}
Let $\alpha \in (0,1)$,  $(x_i)_{i=1}^n$, $(y_i)_{i=1}^n \subseteq \R$ and $(\sigma, \tau)$ be an optimizer for \eqref{eq:tsp}, with associated cycle $G \subseteq \bip_n$. For every edge in $G$, there can be at most $2$ edges in $G$ that cross it.
\end{lemma}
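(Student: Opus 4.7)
I would argue by contradiction. Suppose $G$ is an optimizer yet some edge $e\in G$ is crossed by three distinct edges $e_1,e_2,e_3\in G$; my plan is to combine the monotonicity inequality~\eqref{eq:monotonicity-kfactor} with a $3$-opt rearrangement to exhibit a bipartite Hamiltonian cycle of strictly smaller cost.

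First I would establish a parity restriction on the crossings. Enumerating the edges of $G$ as $f_1,\dots,f_{2n}$ in cyclic order, so that the types of the vertices alternate along the cycle, the unique $2$-opt modification that removes two chosen edges $f_s,f_t$ and reconnects the pieces into a single Hamiltonian cycle adds the edges $(u_s,u_t)$ and $(u_{s+1},u_{t+1})$, and this is bipartite-valid precisely when $s$ and $t$ have different parities. If an edge $e'\in G$ crossed $e$ and had the opposite parity, then this cycle-preserving bipartite $2$-opt would be available, and repeating the strict concavity computation of Lemma~\ref{lem:non-crossing-matching} one would obtain a strict decrease of the cost, contradicting optimality via~\eqref{eq:monotonicity-kfactor}. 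Hence at optimality every crossing of $e$ sits at the same parity as $e$.

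Second, I would apply a $3$-opt argument. Each crossing $e_i$ has one endpoint $p_i$ in $I=(u,u')$ (where $e=(u,u')$, $u<u'$) and the other $q_i$ outside $I$, either to the left of $u$ or to the right of $u'$. Removing the three same-parity edges $e,e_i,e_j$ from $G$ splits the cycle into three sub-paths, each having one endpoint of each bipartite class. Up to the original configuration, there is a \emph{unique} way to reconnect those three paths into a bipartite Hamiltonian cycle: tracing through the cyclic order in $G$ one finds that the new cycle contains the triple $(u,v_i'),(v_i,v_j'),(v_j,u')$ in place of the old triple $(u,u'),(v_i,v_i'),(v_j,v_j')$, where $v_\ell,v_\ell'$ denote the interior/exterior endpoints of $e_\ell$. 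One checks directly that this $3$-opt preserves the Euclidean sum,
\[
(u'-u)+(v_i'-v_i)+(v_j'-v_j)\;=\;(v_i'-u)+(v_j'-v_i)+(u'-v_j).
\]

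The core step, which is also the main obstacle, is to select the pair $e_i,e_j$ among the $\geq 3$ crossings so that the resulting new triple of edge lengths strictly \emph{majorizes} the old one; by strict Schur-concavity of $t\mapsto t^\alpha$ for $\alpha\in(0,1)$ this yields a strict decrease of the $\alpha$-cost and contradicts optimality. Using the pigeonhole principle, at least two of the three crossings, say $e_1$ and $e_2$, have their exterior endpoints on the same side of $I$; I would then carry out a short case analysis on the cyclic order in which $e_1,e_2$ appear around $e$ in $G$ and on the relative ordering of $v_1,v_2$ inside $I$ (and of $v_1',v_2'$ outside) to show that, at least for one such pair among the three crossings, the new triple majorizes the old one. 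The tricky configurations are those where a same-side pair happens to sit in the ``wrong'' cyclic order inside $G$; for those one switches to a different pair (same-side or opposite-side, guaranteed to exist since there are at least three crossings available) and verifies the majorization in that case instead. This case analysis, while elementary, is the delicate part of the argument.
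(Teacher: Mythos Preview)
Your approach is far more involved than the paper's, and it leaves its decisive step unfinished. The paper's proof is essentially two lines: by the monotonicity inequality~\eqref{eq:monotonicity-kfactor}, if an edge $(j,\ell)\in G$ crosses $(i,k)$ and neither $(i,\ell)$ nor $(j,k)$ lies in $G$, then the swap $(i,k),(j,\ell)\mapsto(i,\ell),(j,k)$ is feasible and strictly decreases the cost (this is exactly the strict-concavity computation of Lemma~\ref{lem:non-crossing-matching}). Hence for every crossing edge at least one of $(i,\ell),(j,k)$ must already be in $G$. Since $i$ and $k$ each have degree exactly~$2$ in $G$ and one incidence at each is taken up by $(i,k)$ itself, there is room for at most one extra edge at $i$ and one at $k$, giving at most two crossings. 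No parity discussion, no $3$-opt, no majorization.

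Your $3$-opt/majorization route is a genuinely different idea, but the key claim---that among any three same-parity crossings one can always select a pair whose $3$-opt reconnection yields a new triple of edge lengths strictly majorizing the old one---is only asserted, not proved; you yourself call the required case analysis ``delicate'' and do not carry it out. There is also a slip in the set-up: the displayed identity $(u'-u)+(v_i'-v_i)+(v_j'-v_j)=(v_i'-u)+(v_j'-v_i)+(u'-v_j)$ is an identity of \emph{signed} differences, not of lengths $|\cdot|$, so once one passes to absolute values the two triples need not have equal sums, and without that the majorization/Schur-concavity step has nothing to stand on. Even if one repairs the bookkeeping, it is not evident that a majorizing pair always exists among the three crossings, and producing one would require a full case analysis over the cyclic order in $G$ and the left/right positions of the exterior endpoints. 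The paper's degree-counting argument avoids all of this.
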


\begin{proof}
Following the proof of \cref{lem:non-crossing-matching}, we see that for each edge $(j, \ell) \in G$ that crosses $(i,k)$, at least one edge among $(i,\ell)$, $(j, k)$ must belong to $G$, otherwise the argument (which uses only \eqref{eq:monotonicity-assignment} and strict concavity) would carry on using \eqref{eq:monotonicity-kfactor} instead. However, the degree of each vertex in $G$, in particular of $i$ and $k$, is exactly $2$, so this can happen at most $2$ times (because we take into account the  edge $(i,k)$ contributing to the degrees of $i$ and $k$).
\end{proof}

It will be useful to bound the cost of the TSP in terms of the assignment problem as follows.

\begin{lemma}\label{lem:capelli}
Given $\alpha \in (0,1)$,  for every $(x_i)_{i=1}^n$, $(y_i)_{i=1}^n \subseteq [0,1]$,  it holds
\begin{equation}\label{eq:boundk-fac-mp}  0 \le  \TSP((x_i)_{i=1}^n, (y_i)_{i=1}^n )  -    2 \Mp( (x_i)_{i=1}^n, (y_i)_{i=1}^n ) \le  1+ n^{1-\alpha}.\end{equation}
\end{lemma}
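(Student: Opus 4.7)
I would view any optimal TSP configuration $(\sigma,\tau)$ as producing a Hamiltonian cycle on $\bip_n$ whose edges partition naturally into two perfect matchings: the edges $\{(\sigma(i),\tau(i))\}_{i=1}^n$ and the edges $\{(\sigma(i+1),\tau(i))\}_{i=1}^n$. Both are perfect matchings because $\sigma$ and $\tau$ are permutations. Hence the TSP cost is the sum of the costs of two bipartite matchings, each of which is bounded below by $\Mp$, giving $\TSP\ge 2\Mp$ immediately.

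\textbf{Upper bound.} The idea is to bootstrap from an optimal matching to a cycle by adding a cheap ``connecting'' matching ordered along the sorted $x$'s. Let $\sigma^*$ be optimal for $\Mp$, and let $\pi\in\cS_n$ be such that $x_{\pi^{-1}(1)}\le\ldots\le x_{\pi^{-1}(n)}$ is the sorted sequence of the $x_i$'s. Consider the bipartite cycle
\[
x_{\pi^{-1}(1)}\to y_{\sigma^*(\pi^{-1}(1))}\to x_{\pi^{-1}(2)}\to y_{\sigma^*(\pi^{-1}(2))}\to\ldots\to x_{\pi^{-1}(n)}\to y_{\sigma^*(\pi^{-1}(n))}\to x_{\pi^{-1}(1)},
\]
i.e.\ choose $\sigma=\pi^{-1}$ and $\tau=\sigma^*\circ\pi^{-1}$ in the definition \eqref{eq:tsp}. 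The ``odd'' edges $x_{\pi^{-1}(k)}\to y_{\sigma^*(\pi^{-1}(k))}$ are exactly the edges of the optimal matching, contributing total cost $\Mp$. The ``even'' edges $y_{\sigma^*(\pi^{-1}(k))}\to x_{\pi^{-1}(k+1)}$ (indices mod $n$) must be bounded.

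\textbf{Bounding the connecting edges.} Using the subadditivity $|a+b|^\alpha\le |a|^\alpha+|b|^\alpha$ via the triangle inequality inside $|\cdot|^\alpha$, for $k=1,\ldots,n-1$,
\[
|y_{\sigma^*(\pi^{-1}(k))}-x_{\pi^{-1}(k+1)}|^\alpha\le |y_{\sigma^*(\pi^{-1}(k))}-x_{\pi^{-1}(k)}|^\alpha+|x_{\pi^{-1}(k+1)}-x_{\pi^{-1}(k)}|^\alpha,
\]
and analogously the wrap-around edge satisfies $|y_{\sigma^*(\pi^{-1}(n))}-x_{\pi^{-1}(1)}|^\alpha\le |y_{\sigma^*(\pi^{-1}(n))}-x_{\pi^{-1}(n)}|^\alpha+|x_{\pi^{-1}(n)}-x_{\pi^{-1}(1)}|^\alpha$. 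The first pieces sum to $\Mp$. The gap pieces along the sorted $x$'s telescope: $\sum_{k=1}^{n-1}(x_{\pi^{-1}(k+1)}-x_{\pi^{-1}(k)})=x_{\pi^{-1}(n)}-x_{\pi^{-1}(1)}\le 1$. Applying H\"older with exponents $1/\alpha$ and $1/(1-\alpha)$ yields
\[
\sum_{k=1}^{n-1}|x_{\pi^{-1}(k+1)}-x_{\pi^{-1}(k)}|^\alpha\le (n-1)^{1-\alpha}\le n^{1-\alpha},
\]
while the wrap-around gap contributes at most $1$. Collecting everything,
\[
\TSP\le \underbrace{\Mp}_{\text{odd edges}}+\underbrace{\Mp+n^{1-\alpha}+1}_{\text{even edges}}=2\Mp+1+n^{1-\alpha}.
\]

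\textbf{Main difficulty.} There is no real obstacle: the lower bound is a one-line observation, and the upper bound is an explicit construction. The only subtle point is choosing the ``correct'' reparametrization so that gluing the optimal matching with the monotone connector actually produces a single Hamiltonian cycle on $\bip_n$ (not a union of shorter cycles); sorting the $x$'s by $\pi$ and using $\sigma^*\circ\pi^{-1}$ is precisely what guarantees this.
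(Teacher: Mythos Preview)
Your proof is correct and follows essentially the same approach as the paper's. The only cosmetic difference is that the paper first relabels so that the $x_i$'s are increasingly ordered (i.e.\ takes $\pi=\mathrm{id}$ without loss of generality), whereas you carry the sorting permutation $\pi$ explicitly; the construction of the cycle from the optimal matching plus the monotone connector, the triangle inequality step, and the H\"older bound on the sorted gaps are identical.
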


\begin{proof}
We argue as in \cite{capelli2018exact}, see also \cite{goldman2022optimal} and \cite{AGT19}. For the first inequality, simply notice that any cycle alternating between the $x$ and $y$'s induces a pair of assignments (not necessarily optimal), hence
\[ \TSP((x_i)_{i=1}^n, (y_i)_{i=1}^n ) \ge 2 \Mp( (x_i)_{i=1}^n, (y_i)_{i=1}^n ).\]
For the converse, assume without loss of generality that the $(x_i)_{i=1}^n$ are increasingly ordered. In \eqref{eq:tsp}, we let  $\sigma(i) = i$ the identity permutation, and $\tau \in \cS_n$ an optimal assignment between $(x_i)_{i=1}^n$ and $(y_i)_{i=1}^n$. By the triangle inequality, for every $i=1, \ldots, n$, we have
\[ | y_{\tau(i)} - x_{i+1} |^\alpha \le |y_{\tau(i)} - x_{i}|^\alpha + |x_{i} - x_{i+1}|^{\alpha},\]
(with the convention that $n+1 =1$), hence summing upon  $i$ gives
\[\begin{split} \TSP( (x_i)_{i=1}^n, (y_i)_{i=1}^n ) & \le \sum_{i=1}^n 2 |y_{\tau(i)} - x_{i}|^\alpha + |x_{i} - x_{i+1}|^{\alpha} \\
& = 2 \Mp((x_i)_{i=1}^n, (y_i)_{i=1}^n ) + 1 + \sum_{i=1}^{n-1} |x_{i}- x_{i+1}|^{\alpha} .\end{split}\]
To conclude, notice that, by H\"older inequality,
\[ \sum_{i=1}^{n-1} |x_{i}- x_{i+1}|^{\alpha} \le \bra{ \sum_{i=1}^{n-1} | x_i - x_{i+1}|}^{\alpha} n^{1-\alpha} = n^{1-\alpha},\]
hence the thesis.
\end{proof}

\subsection{Boundary functionals}
We recall the boundary functional associated to the assignment problem and the TSP from \cite{BaBo}. For simplicity, we consider only the case that $(x_i)_{i=1}^n, (y_i)_{i=1}^n \subseteq (0,1)$. We define, for the assignment problem,
\begin{equation}\label{eq:dirichelt-mp} \Mp^D( (x_i)_{i=1}^n, (y_i)_{i=1}^n) = \inf   \Mp( (x_i)_{i=1}^n \cup (\tilde{x}_j)_{j=1}^m, (y_i)_{i=1}^n  \cup (\tilde{y}_j)_{j=1}^m), \end{equation}
and for the TSP,
\begin{equation}\label{eq:dirichelt-kfac} \TSP^D( (x_i)_{i=1}^n, (y_i)_{i=1}^n) = \inf \TSP( (x_i)_{i=1}^n \cup (\tilde{x}_j)_{j=1}^m, (y_i)_{i=1}^n \cup  (\tilde{y}_j)_{j=1}^m),\end{equation}
where in both cases the infimum runs over all the possible families of \emph{boundary points} $(\tilde{x}_j)_{j=1}^m, (\tilde{y}_j)_{j=1}^m \subseteq \cur{0,1}$ with $m \in \mathbb{N}$ arbitrary. 

Clearly, the boundary functional is always smaller than the original cost functional (one may let $m=0$). In this section, we prove a converse inequality, up to some error.  The strategy is to prove that $m$ cannot be too large, and then use the following simple inequality.

\begin{lemma}\label{lem:remove-points}
Let $(x_i)_{i=1}^{n+m}$, $(y_i)_{i=1}^{n+m} \subseteq [0,1]$. Then,
\begin{equation}\label{eq:remove-m-assignment} \Mp( (x_i)_{i=1}^{n+m} , (y_i)_{i=1}^{n+m} ) \ge \Mp( (x_i)_{i=1}^n, (y_i)_{i=1}^n) - m,\end{equation}
and
\begin{equation}\label{eq:remove-m-factor} \TSP( (x_i)_{i=1}^{n+m} , (y_i)_{i=1}^{n+m} ) \ge \TSP( (x_i)_{i=1}^n, (y_i)_{i=1}^n) - 2 m.\end{equation}
\end{lemma}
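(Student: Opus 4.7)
The plan is to take an optimal structure for the instance with $n+m$ points of each type (a matching for $\Mp$, a bipartite cycle for $\TSP$) and produce, by local surgery, a valid structure on the kept $n+n$ points of comparable cost. The hypothesis $(x_i),(y_i)\subseteq [0,1]$ will be used only to bound the $\alpha$-length of each newly introduced edge by $1$.

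For \eqref{eq:remove-m-assignment}, I would let $\tilde\sigma\in\cS_{n+m}$ be optimal and define $A = \{i\le n:\tilde\sigma(i)>n\}$ and $B = \{i>n:\tilde\sigma(i)\le n\}$. A double counting of the image of $\{1,\ldots,n\}$ under $\tilde\sigma$ yields $|A|=|B|=:k\le m$. Defining $\sigma\in\cS_n$ by $\sigma(i) = \tilde\sigma(i)$ for $i\in\{1,\ldots,n\}\setminus A$ and by any bijection $A\to \tilde\sigma(B)\subseteq\{1,\ldots,n\}$ on $A$, the kept edges contribute at most $\Mp((x_i)_{i=1}^{n+m},(y_i)_{i=1}^{n+m})$ to the cost of $\sigma$, while the $k$ rerouted edges contribute at most $k\le m$. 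This gives \eqref{eq:remove-m-assignment}.

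For \eqref{eq:remove-m-factor} I would argue by induction on $m$, so it is enough to handle $m=1$. Write an optimal bipartite cycle on $(n+1)+(n+1)$ points as $v_1\cdots v_{2(n+1)}v_1$ with $x$-type on odd and $y$-type on even positions, and let $v_a=x_{n+1}$, $v_b=y_{n+1}$, so $a$ is odd and $b$ even. In the adjacent case (cyclic distance $1$), the cycle reads $\cdots y\, x_{n+1}\, y_{n+1}\, x\cdots$ around $v_a,v_b$, and replacing the three incident edges by the single bipartite shortcut produces a valid cycle of no greater cost, by subadditivity of $t\mapsto t^\alpha$. In the non-adjacent case, $v_a,v_b$ split the cycle into two arcs whose interiors are open paths from $v_{a+1}$ ($y$) to $v_{b-1}$ ($x$) and from $v_{b+1}$ ($x$) to $v_{a-1}$ ($y$); the ``crossed'' bipartite pairing adding the edges $(v_{b-1},v_{a-1})$ and $(v_{b+1},v_{a+1})$ merges these two interiors into one bipartite cycle on $2n$ points, and increases the total cost by at most $2$, since each new edge has $\alpha$-length at most $1$. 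Iterating the $m=1$ step gives \eqref{eq:remove-m-factor}.

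The only subtle point lies in the non-adjacent TSP case: one must check that, among the bipartite pairings of the four open endpoints, the ``crossed'' one above is the only admissible choice. The ``uncrossed'' alternative $(v_{a+1},v_{b-1}),(v_{b+1},v_{a-1})$ would close each arc interior into its own cycle, and since the interiors have an odd number of edges (as each has an even number of vertices, by the parities of $a$ and $b$), these would be odd cycles, hence not bipartite, and not admissible as TSP tours. Once this is noted, the surgery is forced and the bound follows.
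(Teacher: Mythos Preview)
Your argument is correct and follows essentially the same route as the paper: induction on $m$ for the TSP, and a direct surgery for the matching (the paper does the matching inductively as well, but your one-shot relabeling via the sets $A,B$ is equivalent and arguably cleaner). For the TSP, both you and the paper split into the adjacent/non-adjacent cases and reconnect the two arcs by two cross-edges of cost at most $1$ each; your adjacent case is even slightly sharper, since the triangle-inequality shortcut shows no cost is lost there.

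One small slip in your closing remark: the ``uncrossed'' pairing would actually produce two \emph{even} cycles, not odd ones. Each arc interior has $b-a-1$ vertices (even, as you note) and $b-a-2$ edges; closing it adds one edge, giving an even cycle, which is bipartite. The reason the uncrossed option is inadmissible is simply that it yields two disjoint cycles rather than a single Hamiltonian tour---bipartiteness is not the obstruction. This does not affect your main argument, since you only need that the crossed pairing works, and it does.
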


\begin{proof}
It is sufficient to argue by induction, and prove the inequalities in the case $m=1$. 

To prove \eqref{eq:remove-m-assignment}, consider an optimal assignment $\sigma \in \cS_{n+1}$ for the left hand side.   If $\sigma(n+1) = n+1$, then $\sigma$ can be restricted on $\cur{1, \ldots, n}$ to obtain an assignment between the first $n$ points, and by dropping $|x_{n+1}-y_{\sigma(n+1)}|^\alpha$ from the left hand side, we obtain an upper bound for the right hand side. Otherwise, 
we define an assignment $\tilde{\sigma} \in \cS_n$  by letting $\tilde{\sigma}(\sigma^{-1}(n+1)) =  \sigma(n+1)$ and $\tilde{\sigma} = \sigma$ for all the remaining elements. We have therefore
\[ 
 \Mp( (x_i)_{i=1}^{n+1} , (y_i)_{i=1}^{n+1} )  
 \ge \Mp( (x_i)_{i=1}^{n} , (y_i)_{i=1}^{n} ) - |x_{\sigma^{-1}(n+1)} - y_{\sigma(n+1)}|^\alpha\] 
hence \eqref{eq:remove-m-assignment} with $m=1$, since $|x_{\sigma^{-1}(n+1)} - y_{\sigma(n+1)}|^\alpha \le 1$.

The argument leading to \eqref{eq:remove-m-factor} is similar but slightly more involved. Let $G \subseteq \bip_{n+1}$ be an optimizer for \eqref{eq:tsp}. Consider first the case that $x_{n+1}$, $y_{n+1}$ do not form an edge in $G$, i.e., $(n+1, n+1) \notin G$. Without loss of generality, i.e., up to relabelling the points, we can then assume that $x_{n+1}$ is connected to the points $y_1$, $y_2$ in $G$, and similarly $y_{n+1}$ to the points $x_1$, $x_2$. Since $G$ is a cycle, after removing $x_{n+1}$ and $y_{n+1}$,  
we have two connected components, $G_1$ and $G_2$. Up to relabeling the points, we may assume that $x_1$, $y_1 \in G_1$, and $x_2$, $y_2 \in G_2$. Thus, to define a cycle it is sufficient to add two pair of edges, one connecting $x_1$ to $y_2$ and one connecting $x_2$ to $y_1$. This yields \eqref{eq:remove-m-factor} with $m=1$ in the case that $x_{n+1}$ $y_{n+1}$ do not form an edge in $G$. If instead they are connected, it is sufficient to connect the two other points connected to them to form a cycle, which also  leads \eqref{eq:remove-m-factor} (actually without the factor $2$ in this case). 
\end{proof}




\begin{lemma}\label{lem:dirichlet-neumann-deterministic}
Let $\alpha \in (0,1)$, $n \ge 2$, $(x_i)_{i=1}^n$, $(y_i)_{i=1}^n \subseteq (0,1)$ be all distinct. Define, for $t \in [0,1]$, 
\[ F(t) = \sum_{i=1}^n \I_{\cur{x_i \le t}}, \quad \tilde{F}(t) = \sum_{i=1}^n \I_{\cur{y_i \le t}}.\]
Then, minimization in \eqref{eq:dirichelt-mp}  can be restricted to $m \le [ F - \tilde{F}]_{C^0}$, while minimization in \eqref{eq:dirichelt-kfac} can be restricted to $m \le [ F - \tilde{F}]_{C^0} + 1$.
\end{lemma}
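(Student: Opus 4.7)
The strategy is iterative: starting from any admissible configuration with $m$ boundary points on each side, I describe modifications that do not increase the cost and either cancel opposite shadow pairs at the same endpoint or delete a shadow-to-shadow connection, thereby reducing $m$. The proof splits into a cancellation step, a counting step, and a removal step.

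For the cancellation, suppose a shadow $\tilde x = 0$ and a shadow $\tilde y = 0$ coexist. In the matching case, let $u$ and $w$ be their respective partners: replacing the two matches by $\tilde x\leftrightarrow\tilde y$ and $u\leftrightarrow w$ changes the cost by $|u-w|^\alpha - u^\alpha - w^\alpha\le 0$ by subadditivity of $t\mapsto t^\alpha$ on $[0,\infty)$, and the cost-zero pair may then be deleted. In the TSP case, one performs the analogous move inside the cycle: if $\tilde x$ and $\tilde y$ are already adjacent the reduction is immediate, otherwise one of the two bipartite swaps of the four edges incident to $\tilde x$ and $\tilde y$ keeps a single cycle and lowers cost by subadditivity, producing an adjacent shadow pair that is then removed. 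Treating the endpoint $1$ similarly, we reduce to \emph{case~C} — all shadow $x$'s at $0$ and all shadow $y$'s at $1$ — or its mirror image, and it suffices by symmetry to treat case~C.

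In case~C, let $A$ be the set of original $y$'s matched/connected to some shadow $x$ at $0$. If $A=\emptyset$, shadows form a bipartite subgraph disjoint from the originals: for the matching we delete $m$ cost-$1$ shadow-shadow pairs and reach $m=0$, while for the TSP this contradicts connectedness of the Hamiltonian cycle (since $n\ge 1$). Otherwise set $t^*:=\max A\in(0,1)$, and let $e_{\mathrm{cr}}^x$ (resp.\ $e_{\mathrm{cr}}^y$) count edges with $x$-endpoint in $[0,t^*]$ and $y$-endpoint in $(t^*,1]$ (resp.\ the reverse). Summing half-edges at vertices of $[0,t^*]$ gives the balance
\[ e_{\mathrm{cr}}^x - e_{\mathrm{cr}}^y = d\bigl(m + G(t^*)\bigr), \qquad G:=F-\tilde F,\]
with $d=1$ for the matching and $d=2$ for the TSP. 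By the choice of $t^*$, every edge contributing to $e_{\mathrm{cr}}^x$ other than the shadow-to-shadow ones actually crosses the arc $(0,t^*)$, which is itself an edge of the configuration. Applying \cref{lem:non-crossing-matching} (no crossings) in the matching case and \cref{lem:non-crossing-k-factor} (at most two crossings per edge) in the TSP case then yields
\[ e_{\mathrm{cr}}^x \le k \quad\text{(matching)}, \qquad e_{\mathrm{cr}}^x \le e+2 \quad\text{(TSP)},\]
where $k$ (resp.\ $e$) denotes the number of shadow-to-shadow matches/edges. Combining with $e_{\mathrm{cr}}^y\ge 0$ and $G(t^*)\ge -[G]_{C^0}$ gives $k\ge m-[G]_{C^0}$ for the matching and $e\ge 2\bigl(m-1-[G]_{C^0}\bigr)$ for the TSP.

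Whenever $k\ge 1$ (resp.\ $e\ge 1$) a shadow-to-shadow connection can be deleted at no cost increase: in the matching this removes a cost-$1$ pair; in the TSP, the shadow-shadow edge $(\tilde x,\tilde y)$ has a $y$-neighbor $u$ and an $x$-neighbor $v$ in the cycle, and replacing the three edges $(u,\tilde x),(\tilde x,\tilde y),(\tilde y,v)$ by the single edge $(u,v)$ changes the cost by $|u-v|^\alpha - u^\alpha - 1 - (1-v)^\alpha\le 0$ since $|u-v|\le 1$. In both settings $m$ drops by one. Iterating the whole procedure, the process terminates precisely when $m\le [G]_{C^0}$ in the matching case and $m\le [G]_{C^0}+1$ in the TSP case, as claimed. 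The main obstacle will be verifying that the two-edge bipartite swap in the TSP cancellation step produces a single Hamiltonian cycle rather than splitting the cycle into two disjoint ones, which requires a careful inspection of the cycle traversal and of the two possible ways of reattaching the loose endpoints.
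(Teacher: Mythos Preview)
Your plan is correct and mirrors the paper's own argument: the paper also first cancels opposite-type shadows at a common endpoint via the triangle inequality for the metric $|\cdot|^\alpha$, then takes the rightmost $y$-partner $t$ of a shadow $\tilde x=0$ and uses the no-crossing rule (\cref{lem:non-crossing-matching} for the matching, \cref{lem:non-crossing-k-factor} for the TSP) on the edge $(0,t)$ to force a shadow-to-shadow connection that can be deleted. Your balance identity $e_{\mathrm{cr}}^x - e_{\mathrm{cr}}^y = d\bigl(m + G(t^*)\bigr)$ is simply a tidy repackaging of that same count, and the cycle-reconnection issue you flag for the TSP is handled in the paper exactly as you anticipate (one of the two reattachments joins the two arcs into a single cycle, and both lower the cost).
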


\begin{proof}


Let us focus on the assignment problem first. Given $m$ pairs of boundary points $(\tilde{x}_{j})_{j=1}^m$, $(\tilde{y}_{j})_{j=1}^m \subseteq \cur{0,1}$, with $m > [ F - \tilde{F}]_{C^0}$, and an optimal assignment $\sigma \in \cS_{n+m}$, we argue that one can find a (not necessarily strictly) cheaper  assignment by removing a pair of points. Arguing recursively, this yields the first claim.

First, there is a trivial way to remove a pair of points: if $\tilde{x}_j = \tilde{y}_k$ for some $j,k=1, \ldots, m$, then for some optimal assignment (not necessarily $\sigma$) these two points are matched together, hence we can remove them obtaining a cheaper assignment with $m-1$ pairs. Hence, we can assume that $\tilde{x}_j \neq  \tilde{y}_k$ for every  $j, k=1, \ldots, m$. This implies that either $\tilde{x}_j=0$ (hence $\tilde{y}_j = 1$) for all $j$'s, or $\tilde{x}_j=1$ (hence $\tilde{y}_j=0$) for all $j$'s. For simplicity, let us consider only the first case (the other being symmetric). 

%
%
%
%

Let $j_0 \in \cur{1,\ldots, m}$ be such that $y_{\sigma(j_0)}$ is maximum among the values $\cur{y_{\sigma(j)}}_{j=1}^m$, i.e.\  $\tilde{x}_{j_0}$ is assigned to the rightmost position among all the $\tilde{x}_j$'s. We argue that $y_{\sigma(j_0)} = 1$, i.e.\ it is one among the $\tilde{y}_j$'s, hence by removing the pair $\tilde{x}_{j_0}$, $ y_{\sigma(j_0)}$, we obtain a family with $m-1$ pairs of boundary points, and (strictly) cheaper cost. Assume by contradiction that $y_{\sigma(j_0)} = t <1$. Then, the assumption $m> [ F - \tilde{F}]_{C^0}$, together the fact that the $x_i$'s and $y_i$'s for $i=1, \ldots, n$ are all distinct and $\tilde{y}_j = 1$ for $i=1, \ldots, m$, yields that, on the interval $[0, t)$, there are strictly more points from the family $(x_i)_{i=1}^n \cup (\tilde{x}_j)_{j=1}^m$ rather than the family $(y_i)_{i=1}^n \cup (\tilde{y}_j)_{j=1}^m$. Thus, there must be at least one $x_i \in [0, t)$ with $y_{\sigma(i)} \in (t,1]$. By construction, it cannot be $x_i = 0$, hence the pair $(x_i, y_{\sigma(i)})$ is crossing the pair $(\tilde{x}_{j_0}, y_{\sigma(j_0)}) = (0,t)$, which is a contradiction since $\sigma$ is assumed to be optimal. This concludes the argument for the assignment problem.

For the TSP,  we proceed along the same lines, with due modifications. Given $m$ pairs of boundary points $(\tilde{x}_{j})_{j=1}^m$, $(\tilde{y}_{j})_{j=1}^m \subseteq \cur{0,1}$, with $m > [ F - \tilde{F}]_{C^0} +1$, and an optimal cycle $G \subseteq \bip_{n+m}$, we find a cheaper cycle $G'$ with $m-1$ boundary points. 
Again, consider first the case that  for some $j,k=1, \ldots, m$, it holds $\tilde{x}_{j} = \tilde{y}_{k}$. Here, we argue as in the proof of \cref{lem:remove-points} with $\tilde{x}_j$ instead of $x_{n+1}$ and $\tilde{y}_k$ instead of $y_{n+1}$. The construction yields a cycle $G'$ which is cheaper, because we are substituting pairs of edges of the form $(\tilde{x}_j, u)$, $(v, \tilde{y}_k)$ for some $u$ among the $x$'s and $v$ among the $y$'s with a single edge $(u,v)$, leading to a lower cost by the triangle inequality:
\[ | u- \tilde{x}_j|^\alpha + |\tilde{y}_k - v|^\alpha \ge |u-v|^\alpha,\]
(since $\tilde{x}_j = \tilde{y}_k$). 

Thus, we can assume that $\tilde{x}_j \neq  \tilde{y}_k$ for every  $j, k=1, \ldots, m$ and for simplicity, we discuss only the  case  $\tilde{x}_j = 0$ for every $j=1,\ldots, m$. Let $y_{j_0}$ be maximum among the $y_i$'s (or the $\tilde{y}_j$'s) which share an edge in the cycle $G$ with some $\tilde{x}_j$, $j=1, \ldots, m$. For  simplicity assume that the edge is $(x_{j_0}, y_{j_0})$. We argue also in this case that $y_{j_0} = 1$, i.e.\ it is one of the $\tilde{y}_j$'s. Assume by contradiction that $y_{\sigma(j_0)} = t <1$. Then, the assumption $m> [ F - \tilde{F}]_{C^0} + 1$, together the fact that the $x_i$'s and $y_i$'s for $i=1, \ldots, n$ are all distinct and $\tilde{y}_j = 1$ for $i=1, \ldots, m$, yields that, on the interval $[0, t)$, there are at least $2$ more points from the family $(x_i)_{i=1}^n \cup (\tilde{x}_j)_{j=1}^m$ rather than the family $(y_i)_{i=1}^n \cup (\tilde{y}_j)_{j=1}^m$. Since each point in $G$ has degree exactly $2$, there must be at least two distinct edges in $G$ such that the $x$ point belongs to $[0, t)$ while its $y$ point is in $(t,1]$. By construction, the former cannot be $0$, hence the pairs are crossing the pair $(\tilde{x}_{j_0}, y_{\sigma(j_0)}) = (0,t)$, which is a contradiction since $G$ is assumed to be optimal. Hence, we have that $y_{j_0}=1$, i.e.\ it is one among the $\tilde{y}_j$'s. Let $\bar{x}$ denote the other point which shares an edge with $y_{j_0}$ (different than $x_{j_0}$) and $\bar{y}$ denote the other point which shares an edge with $x_{j_0}$ (different than $y_{j_0}$).  We now remove both $x_{j_0}$ and $y_{j_0}$ from the graph $G$, together with their associated edges, and add the edge $(\bar{x}, \bar{y})$, whose cost is anyway smaller than $1$. Notice that $(\bar{x}, \bar{y})$ cannot be already an edge in $G$ otherwise it would be a cycle of length $4$, which is not the case, since $n \ge 2$.\qedhere
\end{proof}

\section{A Kantorovich-Young problem}\label{sec:ot}


In view of identity \eqref{eq:W-duality}, taking into account Young's  \cref{thm:young}, one is lead to the following definition of a Kantovorich problem associated to a function $g: \I = [a,b] \to \R$ with $g(b) = g(a)$ and finite $q$-variation:
\begin{equation}\label{eq:g-w} \|g \|_{\KY} = \sup \cur{ \int_\I f d g \, : \,  \| f \|_{C^\alpha} \le 1},\end{equation}
provided that $\alpha+1/q>1$, so that the integral is well-defined. In particular, for some $C(\alpha, q) \in (0, \infty)$ it holds, by \eqref{eq:young-bound},
\[ \| g \|_{\KY} \le  C(\alpha, q) | \I |^\alpha \nor{g}_{q-\var}.\]
We have  immediately the following stability estimate:
\begin{equation}\label{eq:kantorovich-young-stability}  \abs{  \| g \|_{\KY} - \|\tilde{g}\|_{\KY}} \le \| g - \tilde{ g}\|_{\KY} \le C(\alpha,q ) | \I |^\alpha \nor{g - \tilde{ g}}_{q-\var}.
\end{equation}

It is quite natural to search for a primal problem, akin to  \eqref{eq:wkz}, which should be interpreted as an optimal transport problem. To this aim, we introduce a suitable notion of coupling associated to  $g: \I \to \R$ with $[g]_{q-\var} < \infty$. We say that a positive Borel measure $\pi$  on $\I \times \I$ (not necessarily finite) is a coupling for $g$ with finite $\alpha$-energy, and write $\pi \in \Gamma_\alpha(g)$, if
\begin{equation}\label{eq:pi-coupling} \int_{\I \times \I} |t-s|^\alpha  \pi (ds,dt) < \infty,\end{equation}
and
\begin{equation}\label{eq:divergence} \int_{\I \times \I} \bra{ f(t) -f(s) } \pi(ds,dt) =  \int_\I f dg ,\end{equation}
for every $f \in \C^\alpha(\I)$, where again the right hand side is well-defined by \cref{thm:young}.  In fact, one could completely avoid the use of Young's integration theory here, by noticing that \eqref{eq:pi-coupling} together with the validity of \eqref{eq:divergence} for $f \in C^1(\I)$ in the ``integrated-by-parts'' form
\[  \int_{\I \times \I} \bra{ f(t) -f(s) } \pi(ds,dt) = -  \int_\I g df, \]
can be used to extend the integration functional to a any $f \in C^\alpha(I)$,
\[ \int_\I f d g := \int_{\I \times \I} \bra{ f(t) -f(s) } \pi(ds,dt).\]

A further equivalent point of view on couplings is provided by defining the positive Borel measure $b(ds, dt) = |t-s|^\alpha  \pi (ds,dt)$, with has then finite total mass and rewrite \eqref{eq:divergence} as
\[  \int_{\I \times \I} \frac{ f(t) -f(s) }{|t-s|^\alpha} b(ds,dt) = \int_\I f d g.\]
which we should interpret as an analogue of $\operatorname{div} b = \mu - \lambda$  in the classical optimal transport problem (here $b$ plays the role of a flow).



The following duality result provides, as  expected, an identification between the two problems.

\begin{proposition}\label{prop:duality}
Let $\I=[a,b] \subseteq \R$, $q > 1$ and $g: \I \to \R$ be a function with finite $q$-variation and such that $g(a) = g(b)$. For every $\alpha \in (1-1/q,1]$, define $\| g\|_{\KY}$ as in \eqref{eq:g-w} where the integral $\int_\I f dg$ is in the sense of Young.  Then,  the supremum in \eqref{eq:g-w} is attained by some $f : \I \to \R$ with $[f]_{C^\alpha} = 1$ and
\begin{equation}\label{eq:kz-primal} \| g \|_{\W} = \min_{\pi \in \Gamma_\alpha (g)} \int_{\I \times \I} |t-s|^\alpha  \pi (ds,dt) < \infty.\end{equation}
\end{proposition}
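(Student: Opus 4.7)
\emph{Plan.} My proof has three parts: weak duality, existence of a dual optimizer, and strong duality through a finite/BV approximation. The weak inequality is immediate from the definition of a coupling: for any $\pi \in \Gamma_\alpha(g)$ and any $f$ with $[f]_{C^\alpha} \le 1$,
\[\int_\I f dg = \int_{\I \times \I} (f(t)-f(s)) d\pi \le [f]_{C^\alpha} \int_{\I \times \I} |t-s|^\alpha d\pi,\]
so $\|g\|_{\KY} \le \inf_\pi \int |t-s|^\alpha d\pi$. Existence of a dual maximizer is obtained by taking a maximizing sequence $(f_n)$ normalized with $f_n(a)=0$ and $[f_n]_{C^\alpha} \le 1$, which is uniformly bounded in $C^\alpha(\I)$. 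By Ascoli-Arzelà a subsequence converges uniformly to some $f^*$ with $[f^*]_{C^\alpha} \le 1$, and continuity of the Young integral under uniform convergence with bounded $p$-variation (here $p=1/\alpha$, so $1/p+1/q>1$ by assumption) yields $\int_\I f_n dg \to \int_\I f^* dg = \|g\|_{\KY}$; rescaling gives $[f^*]_{C^\alpha}=1$ (unless $\|g\|_{\KY}=0$, in which case any unit-seminorm $f$ works trivially).

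For strong duality I would approximate $g$ by right-continuous step functions $g_n$ agreeing with $g$ on a sequence of partitions $\mathcal{P}_n$ of $\I$ with mesh going to zero. Each $g_n$ is BV with $dg_n = \mu_n^+ - \mu_n^-$ and $\mu_n^+(\I)=\mu_n^-(\I)$ (since $g(a)=g(b)$), so the classical Kantorovich duality for the concave cost $|t-s|^\alpha$ gives $\|g_n\|_{\KY} = \int |t-s|^\alpha d\pi_n^*$ for an optimal transport plan $\pi_n^*$ from $\mu_n^-$ to $\mu_n^+$. A Young-type estimate on the difference between Riemann sums yields
\[\sup_{[f]_{C^\alpha}\le 1}\abs{\int_\I f dg_n - \int_\I f dg} \longrightarrow 0\]
as the mesh shrinks, whence $\|g_n\|_{\KY} \to \|g\|_{\KY}$. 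The measures $b_n = |t-s|^\alpha d\pi_n^*$ are positive, with total mass $\|g_n\|_{\KY}$, on the compact set $\I\times\I$; a subsequence converges weakly to some $b^*$. I then define $\pi^*$ on $D = \I \times \I \setminus \{s=t\}$ by $d\pi^* = db^*/|t-s|^\alpha$, and the energy bound $\int|t-s|^\alpha d\pi^* \le b^*(\I\times\I) \le \liminf_n b_n(\I\times\I) = \|g\|_{\KY}$ is immediate.

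It remains to verify the divergence condition \eqref{eq:divergence}. Testing first against $f \in C^\beta(\I)$ with $\beta > \alpha$, the function $(s,t)\mapsto (f(t)-f(s))/|t-s|^\alpha$ extends continuously to the diagonal with value zero, so weak convergence of $b_n$ gives
\[\int(f(t)-f(s)) d\pi^* = \int \frac{f(t)-f(s)}{|t-s|^\alpha} db^* = \lim_n \int_\I f dg_n = \int_\I f dg.\]
The identity is then extended to arbitrary $f \in C^\alpha(\I)$ by smoothing to $f_k \in C^\beta$ with $[f_k]_{C^\alpha}$ bounded, splitting $\int ((f-f_k)(t) - (f-f_k)(s)) d\pi^*$ into the region $\{|t-s|>\delta\}$ (controlled by uniform convergence of $f_k$ to $f$) and $\{|t-s|\le\delta\}$ (controlled by the integrable majorant $|t-s|^\alpha$ together with the finite $\alpha$-energy of $\pi^*$), and letting first $k\to\infty$ and then $\delta\to 0$. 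Combined with weak duality, this gives $\int|t-s|^\alpha d\pi^* = \|g\|_{\KY}$, proving strong duality and the attainment of the primal minimum. The main technical obstacle is precisely this last passage with merely $C^\alpha$ test functions: the integrand $(f(t)-f(s))/|t-s|^\alpha$ fails to be continuous at the diagonal, so weak convergence of $b_n$ cannot be invoked directly, and the finite-$\alpha$-energy truncation is the device that circumvents this.
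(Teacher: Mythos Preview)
Your argument is essentially correct but follows a genuinely different route from the paper for the strong duality. The paper establishes \eqref{eq:kz-primal} by a direct application of the Fenchel--Rockafellar theorem on $E=C(\I\times\I)$: setting $\Theta(u)=0$ if $u(s,t)\ge -|t-s|^\alpha$ and $\Xi(u)=\int_\I f\,dg$ when $u(s,t)=f(t)-f(s)$, the abstract duality immediately identifies the primal as a minimum over signed measures, and positivity of the optimal $\pi$ follows from the form of $\Theta^*$. Your approach instead approximates $g$ by step functions $g_n$, invokes classical Kantorovich duality on each $g_n$ to produce couplings $\pi_n^*$, and then extracts a limit via weak compactness of the finite measures $b_n=|t-s|^\alpha\,\pi_n^*$. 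This is more hands-on and avoids the abstract convex-analysis machinery, at the price of the technical limit passage at the diagonal that you correctly flag and handle by the $\{|t-s|>\delta\}$ truncation. Both proofs are valid; the paper's is shorter and yields existence of the primal minimizer in one stroke, while yours is more constructive and makes the link to the classical BV case explicit.

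One small imprecision in your existence-of-maximizer step: uniform convergence of $f_n$ together with a bound on $[f_n]_{C^\alpha}$ does \emph{not} directly give convergence of $\int_\I f_n\,dg$; you need to interpolate, observing that uniform convergence plus bounded $C^\alpha$ seminorm yields convergence in $C^\beta$ for any $\beta<\alpha$, and then apply the Young bound \eqref{eq:young-bound} with some $\beta\in(1-1/q,\alpha)$. The paper makes exactly this step explicit.
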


We refer to a maximizer $f$ in \eqref{eq:g-w} as a Kantorovich-Young potential associated to $g$.

\begin{proof}
To show that \eqref{eq:g-w} is attained by some $f$, consider a maximizing sequence $f_n$, i.e.\ such that
\[ \lim_{n\to \infty} \int f_n d g = \| g\|_{\KY},\]
with $[f_n]_{C^\alpha} \le 1$ for every $n$. Up to adding a constant (which does not change the integrals $\int_\I f_n d g$ since $g(b) = g(a)$, we may assume that $f(a) = 0$. By compactness of the embedding $C^\alpha \subseteq C^\beta$, for $\beta<\alpha$, we can assume, up to extracting a subsequence, that $f_n \to f\in \C^{\beta}(\I)$ where convergence is in the sense of $C^\beta(\I)$ for some $\beta<\alpha$ such that $\beta+1/q>1$ (this is possible since $\alpha+1/q>1$). Moreover, by lower semicontinuity, $[f]_{C^\alpha} \le \liminf_n [f_n]_{C^\alpha}  =1$. Finally, using \eqref{eq:young-bound}, we deduce that
\[ \lim_{n \to \infty} \int (f_n -f) d g \le  \limsup_{n \to \infty} C(\beta, q) [f_n -f]_{C^\beta} [g]_{q-\var} = 0,\]
hence $\int _\I f d g  = \|g\|_{\KY}$. It must be $[f]_{C^\alpha} = 1$, otherwise letting $\tilde{f} = f/[f]_{C^{\alpha}}$ would lead to $\int _\I \tilde{f} d g  > \|g\|_{\KY}$, a contradiction.
 For the proof of \eqref{eq:kz-primal}, we use the Fenchel-Rockafellar duality theory as in \cite[Theorem 1.9]{villani2021topics}. Let $E = C( \I\times \I; \R)$ endowed with the uniform norm and continuous dual $E^* = \mathcal{M}( \I \times I)$ given by the signed Borel measures. We introduce the following convex functions defined on $E$,
\[ \Theta(u) = \begin{cases} 0 & \text{if $u(s,t) \ge - |t-s|^\alpha$, for every $s$, $t \in \I$,}\\
+ \infty & \text{otherwise,}\end{cases}\]
and
\[ \Xi(u) = \begin{cases} \int_\I f d g & \text{if $u(s,t) = f(t) - f(s)$ for some  $f \in C^\alpha(\I)$,}\\
+\infty & \text{otherwise.}\end{cases}\]
Notice that $\Xi$ is well defined since, if $f(t) - f(s) = \tilde{f}(t) - \tilde{f}(s)$, we obtain that $f(t) = \tilde{f}(t) + c$ for some constant $c \in \R$, and $\int_\I f d g = \int_\I \tilde{f} d g $ since $\int_\I c d g = c \bra{ g(b) - g(a)} = 0$. 
If we let $u_0(s,t) = 1$ for $s$, $t \in \I$, then $\Theta(u_0) = 0$ and $\Xi(u_0) = 0$, moreover $\Theta$ is continuous at $u_0$ (with respect to the uniform norm). By \cite[Theorem 1.9]{villani2021topics}, it follows that
\[ \inf_{u \in E} \sqa{ \Theta(u) + \Xi(u) } = \max_{\pi \in E^*} \sqa{ - \Theta^*(- \pi) - \Xi^*(\pi)},\]
where $\Theta^*$ and $\Xi^*$ denote the Legendre transforms of $\Theta$ and $\Xi$, respectively. Clearly,
\[ \int_{u \in E} \sqa{ \Theta(u) + \Xi(u) } = \inf\cur{ \int_I f dg  \, : \, f(t)-f(s) \ge -|t-s|^\alpha } =  - \| g \|_{\KY},\]
it is sufficient to recognize the dual problem as the opposite of the right hand side in \eqref{eq:kz-primal}. We easily see that
\[ \Theta^*( - \pi ) = \sup_{u \in E} \cur{ - \int_{\I \times \I} u d\pi -  \Theta(u)} =  \int_{\I \times \I} |t-s|^\alpha \pi(ds, dt),\]
while
\[ \Xi^*(\pi)  =\begin{cases} 0 & \text{if $\int_{\I \times \I} (f(t)-f(s)) \pi (ds, dt) = \int f d g$ for every $f \in C^\alpha(I)$,}\\
+ \infty & \text{otherwise.}\end{cases}\]
Therefore,
\[ \max_{\pi \in E^*} \sqa{ - \Theta^*(- \pi) - \Xi^*(\pi)} = \max_{\pi \in \Gamma_\alpha(g)} \cur{ -  \int_{\I \times \I} |t-s|^\alpha \pi(ds, dt)},\]
hence \eqref{eq:kz-primal}.
\end{proof}


\begin{remark}
The fact that $\Gamma_\alpha(g)$ is not empty is implicit in \eqref{eq:kz-primal}, however it may be interesting to notice that the construction of Young's integral amounts to providing a coupling with finite $\alpha$-energy. For simplicity, let us discuss only the case of a H\"older continuous $g \in C^{\beta}([0,1])$ with $\beta = 1/q$. Then, the argument simplifies as one can prove  (see e.g.\ \cite{feyel2006curvilinear}) that
\[ \int_0^1 f d g   = \sum_{n=1}^\infty \sum_{k=0}^{2^{n-1}-1} (f\bra{(2k)2^{-n}} - f\bra{(2k+1)2^{-n}} )(g\bra{(2k+2)2^{-n}}  - g\bra{(2k+1)2^{-n}}),\]
where we used that $g(0) = g(1) = 0$. Thus, by defining the measure
\[\begin{split} \pi & := \sum_{n=1}^\infty \sum_{k=0}^{2^{n-1}-1} (g\bra{(2k+2)2^{-n}}  - g\bra{(2k+1)2^{-n}})^+ \delta_{ ( (2k)2^{-n}, (2k+1)2^{-n})} \\
& \quad \quad  + (g\bra{(2k+2)2^{-n}}  - g\bra{(2k+1)2^{-n}})^- \delta_{ (  (2k+1)2^{-n}, (2k)2^{-n})} \end{split} \]
where $x^+ = \max\cur{x,0}$, $x^- =(-x)^+$ and $\delta_z$ denotes the Dirac measure at $z$, one obtains that $\pi \in \Gamma_\alpha(g)$.
\end{remark}

A relevant notion in optimal transport theory is cyclical monotonicity of optimal couplings, which we similarly recover here.

\begin{definition}
A set $\Gamma \subseteq  \R \times \R$ is said to be $\alpha$-monotone if, for every $((s_i, t_i) )_{i=1}^n \subseteq \Gamma$, and every permutation $\sigma$ over $\cur{1, \ldots, n}$,
\[  \sum_{i=1}^n |t_{i} - s_{i}|^\alpha  \le \sum_{i=1}^n |t_{i} - s_{\sigma(i)}|^\alpha .\]\end{definition}

Clearly, any subset $((s_i, t_i)) _{i=1}^n \subseteq \Gamma$ of an $\alpha$-monotone set $\Gamma$ provides an optimal assignment for the pair of points $(s_i)_{i=1}^n$, $(t_i)_{i=1}^n \subseteq \R$. In particular, it satisfies the no-crossing property of \cref{lem:non-crossing-matching}.

\begin{proposition}
If $\pi \in \Gamma_\alpha(g)$ is optimal, then $\supp \pi$ is $\alpha$-monotone. Viceversa, if $\pi \in \Gamma_\alpha(g)$ is concentrated on a $\alpha$-monotone set, then it is optimal. 
\end{proposition}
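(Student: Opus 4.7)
My plan is to adapt to the Kantorovich-Young setting the classical interplay between cyclical monotonicity and optimality, leveraging the primal-dual identity of \cref{prop:duality}.

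\emph{Optimality implies $\alpha$-monotone support.} I will argue by contradiction. If $\supp\pi$ is not $\alpha$-monotone, there exist $((s_i,t_i))_{i=1}^n\subseteq\supp\pi$ and $\sigma\in\cS_n$ with $\delta:=\sum_i |t_i-s_i|^\alpha - \sum_i |t_i-s_{\sigma(i)}|^\alpha>0$. For $\varepsilon>0$ so small that the rectangles $U_i=B_\varepsilon(s_i)\times B_\varepsilon(t_i)$ are pairwise disjoint, let $m:=\min_i\pi(U_i)>0$, set $\pi_i:=(m/\pi(U_i))\pi|_{U_i}$, and denote by $\lambda_i$ and $\rho_i$ its $s$- and $t$-marginals. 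I would then consider the competitor
\[
\tilde\pi := \pi -\sum_{i=1}^n \pi_i + \sum_{i=1}^n \tfrac{1}{m}\lambda_{\sigma(i)}\otimes \rho_i,
\]
and check: (i) $\tilde\pi\geq 0$ by disjointness of the $U_i$; (ii) $\tilde\pi\in\Gamma_\alpha(g)$, since $\sigma$ being a permutation gives $\sum_i\lambda_{\sigma(i)}=\sum_i\lambda_i$ and $\sum_i\rho_i=\sum_i\rho_i$, so the signed modification has zero $s$- and $t$-marginals, which is precisely the condition ensuring that \eqref{eq:divergence} is preserved; (iii) by $\alpha$-H\"older continuity of $|t-s|^\alpha$ on each rectangle, the cost change equals $-m\delta + O(n\varepsilon^\alpha)$, strictly negative for $\varepsilon$ small, contradicting optimality.

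\emph{$\alpha$-monotone support implies optimality.} I will adapt Rockafellar's cyclical construction. Setting $\Gamma:=\supp\pi$ and fixing $(s_*,t_*)\in\Gamma$, define for $t\in\I$
\[
f(t):= \sup_{N\geq 0,\;(s_i,t_i)_{i=1}^N\subseteq\Gamma}\biggl\{ \sum_{i=1}^N |s_i-t_i|^\alpha - \sum_{i=0}^N |t_i-s_{i+1}|^\alpha \biggr\},
\]
with the conventions $t_0=t_*$, $s_{N+1}=t$. A cyclic application of $\alpha$-monotonicity to $(s_i,t_i)_{i=1}^N$ with the shift $i\mapsto i-1\pmod N$ (and subadditivity of $|\cdot|^\alpha$ handling $N\leq 1$) yields after telescoping the uniform bound $\leq|\I|^\alpha$, hence $f$ is finite. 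Each admissible chain depends on $t$ only through the term $-|t_N-t|^\alpha$, and therefore has H\"older seminorm $\leq 1$ in $t$; taking the supremum preserves this, so $[f]_{C^\alpha}\leq 1$. Extending any near-optimal chain for $f(s)$ by appending $(s,t)\in\Gamma$ as a new $\Gamma$-edge followed by the trivial free step $t\to t$ shows $f(t)-f(s)\geq|s-t|^\alpha$ on $\Gamma$, and the reverse inequality comes from the H\"older bound. Consequently,
\[
\int_{\I\times\I}|t-s|^\alpha d\pi = \int_{\I\times\I}(f(t)-f(s)) d\pi = \int_\I f\,dg \leq \|g\|_\KY,
\]
and combined with the reverse inequality from \cref{prop:duality}, $\pi$ achieves the minimum.

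The main obstacle I foresee is in the second direction: establishing finiteness of the Rockafellar supremum requires using $\alpha$-monotonicity with the \emph{right} cyclic permutation on $N$ pairs, after which a delicate telescoping leaves only one distance term bounded by $|\I|^\alpha$. In the first direction, the subtlety is constructing a local perturbation that simultaneously cancels both marginals (to remain in $\Gamma_\alpha(g)$) while strictly reducing the cost; the product-measure replacement $\lambda_{\sigma(i)}\otimes\rho_i$ achieves this thanks to the bookkeeping cancellation induced by $\sigma$.
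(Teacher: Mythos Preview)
Your proposal is correct and self-contained, but it takes a different route from the paper in both directions.

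For the forward direction, the paper avoids the perturbation argument entirely: it takes a Kantorovich-Young potential $f$ (whose existence is part of \cref{prop:duality}), combines the duality identity $\int_\I f\,dg = \int |t-s|^\alpha\,d\pi$ with the coupling identity $\int_\I f\,dg = \int (f(t)-f(s))\,d\pi$, and deduces that $\pi$ is concentrated on the closed set $\Gamma = \{(s,t): f(t)-f(s)=|t-s|^\alpha\}$, which is immediately $\alpha$-monotone because $[f]_{C^\alpha}\le 1$. This is shorter and cleaner than your competitor construction, at the price of relying on the attainment of the supremum in \eqref{eq:g-w}. Your approach, by contrast, is the classical ``improvement by local rerouting'' argument and works without ever invoking a potential; note however that the error in your cost estimate is $O(nm\varepsilon^\alpha)$ rather than $O(n\varepsilon^\alpha)$, so that the net change is $m(-\delta + O(n\varepsilon^\alpha))$, still negative for small $\varepsilon$.

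For the converse, the paper simply cites the Smith--Knott generalization of Rockafellar's theorem \cite[Theorem 6.14]{AGS} to produce $f$ with $[f]_{C^\alpha}=1$ and $f(t)-f(s)=|t-s|^\alpha$ $\pi$-a.e., then closes with the same chain of equalities. What you do is give an explicit proof of that cited result via the chain construction, verifying finiteness, the H\"older bound, and the saturation on $\Gamma$ by hand. Your version is thus more elementary and makes the paper independent of the external reference, while the paper's version is terser.
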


\begin{proof}
Let $f: \I \to\R$ be a Kantorovich-Young potential associated to $g$. Then, by \eqref{eq:kz-primal}
\[ \int_\I f d g = \int_{\I \times \I} |t-s|^\alpha \pi(ds, dt),\]
but also, by \eqref{eq:divergence},
\[\int_\I f d g = \int_{\I \times \I} \bra{ f(t) - f(s)} \pi(ds, dt).\]
Hence,
\[ \int_{\I \times \I} \bra{ |t-s|^\alpha - ( f(t) - f(s))  } \pi (ds, dt) =0,\]
but the integrand is non-negative for every $s$, $t \in \I$, since $[f]_{C^\alpha} \le 1$, hence $\pi$ is concentrated on the closed set
\[ \Gamma = \cur{ (s,t) \in \I \times \I \, : \, f(t) - f(s) = |t-s|^\alpha}.\]
One checks easily that $\Gamma$ is $\alpha$-monotone, since for every $((s_i, t_i))_{i=1}^n \subseteq \Gamma$ and $\sigma \in \cS_n$,
\[ \sum_{i=1}^n |t_i-s_i|^\alpha = \sum_{i=1}^n f(t_i) - f(s_i) = \sum_{i=1}^n f(t_{\sigma(i)}) - f(s_i) \le \sum_{i=1}^n | t_{\sigma(i)} - s_i|^\alpha.\]
For the converse, one uses the Smith and Knott's generalization of Rockafellar theorem \cite[theorem 6.14]{AGS} in order to construct $f: \I \to \R$ with $[f]_{C^\alpha} =1$ and such that
\[ \pi \bra{ \cur{ (s,t) \in \I \times \I \, : \, f(t) - f(s) < |t-s|^\alpha}} = 0.\]
Hence, 
\[ \int f dg = \int_{\I \times \I} \bra{ f(t) - f(s)} \pi(ds, dt) = \int_{\I \times \I} |t-s|^\alpha \pi(\ds, dt)\]
 and by \eqref{eq:kz-primal} we see that $f$ is Kantorovich-Young potential associated to $g$ and $\int_{\I \times \I} |t-s|^\alpha \pi(\ds, dt) = \| g \|_{\KY}$.
\end{proof}

\section{Proof of \cref{thm:main}}\label{sec:main}

We split the argument into the two cases: $0<\alpha<1/2$ and $1/2<\alpha<1$. In the latter case, we combine the results of \cref{sec:empirical}, in particular inequality \eqref{eq:dudley-q} with those of Section \ref{sec:ot}, in particular inequality \eqref{eq:kantorovich-young-stability}. For every $n$, there exists a Brownian bridge $B_n$ defined on a suitable probability space, together with the empirical cumulative distribution functions $F_n$, $\tilde{F}_n$ associated to $(X_i)_{i=1}^n$, $(Y_i)_{i=1}^n$, such that \eqref{eq:dudley-q} holds, for some $q>2$, such that $\alpha+1/q>1$. Since $B_n$ is $\beta$-H\"older continuous for every $\beta<1/2$, we have by \eqref{eq:p-var-decreases} and \eqref{eq:f-hol-var-bound} that $[ B_n \circ F]_{q -\var} \le [ B_n]_{q-\var} \le[B_n]_{C^{1/q}} <\infty$ $\mathbb{P}$-a.s.\ (and with finite moments of all orders). 

Recall that by \eqref{eq:birkhoff} we have the identity
\[ n^{-1/2} \Mp( (X_i)_{i=1}^n, (Y_i)_{i=1}^n) =  \| \sqrt{n} (F_n - \tilde{F}_n) \|_{\KY} .\]
By \eqref{eq:kantorovich-young-stability} and \eqref{eq:dudley-q}, we obtain
\[\begin{split} \EE& \sqa{ \abs{ \| \sqrt{n} (F_n - \tilde{F}_n) \|_{\KY} - \| \sqrt{2} B_n \circ F \|_{\KY} }^p}^{1/p}\\
 & \le C(\alpha, q) |\I|^{\alpha} \EE\sqa{ [ \sqrt{n} (F_n - \tilde{F}_n) - \sqrt{2} B_n \circ F]_{q-\var}^p }^{1/p} \le  C(\alpha, q) |\I|^{\alpha} n^{-(q-2)/(2q)},\end{split}\]
which yields the claimed convergence in law, as well as the convergence in expectation for $\alpha \in (1/2,1)$ (actually, we have convergence of the moments of any order). 
%
%
%
%


To settle the case $0<\alpha<1/2$, we rely  upon  the theory developed in \cite{BaBo}. Precisely, after \cite[Theorem 2, Theorem 19, Theorem 36]{BaBo} to obtain complete convergence and convergence in expectation, it is sufficient argue only in the case of  uniformly distributed points on the interval $[0,1]$, and identify the two limits
\begin{equation}\label{eq:dirichlet-neumann-alpha-small} \lim_{n \to \infty}n^{\alpha-1}  \EE\sqa{ \Mp^D( (X_i)_{i=1}^n, (Y_i)_{i=1}^n) } = \lim_{n \to \infty}n^{\alpha-1}  \EE\sqa{ \Mp( (X_i)_{i=1}^n, (Y_i)_{i=1}^n) }\end{equation}
(the fact that both limits exist is also proved in \cite{BaBo}). Since inequality $\le$ is trivially true, we only need to prove inequality $\ge$. Combining \cref{lem:remove-points} with \cref{lem:dirichlet-neumann-deterministic}, we reduce the problem to show that
\[ \lim_{n \to \infty} n^{\alpha-1} \EE\sqa{ [ F_n - \tilde{F}_n]_{C^0}  } = 0,  \]
where $F_n$, $\tilde{F}_n$ are the cumulative distribution functions associated to $(X_i)_{i=1}^n$, $(Y_i)_{i=1}^n$.  But this in turn is a simple consequence of \eqref{eq:dudley-q} with $p=1$,
\begin{equation}\label{eq:alpha-1-2-final}  \sqrt{n} \EE\sqa{ [ F_n - \tilde{F}_n]_{C^0} } \le  \EE\sqa{ \sqrt{n} [ F_n - \tilde{F}_n]_{q-\var} } \le \EE\sqa{ [B_n \circ F]_{q-\var}} + C(1,q) n^{-(q-2)/(2q)}.\end{equation}
 where the first inequality follows from \eqref{eq:oscillation-variation}. Multiplying both sides by $n^{\alpha-1/2}$ leads to the conclusion.

\begin{remark}\label{rem:alpha-1-2}
When $\alpha=1/2$, the argument above shows that
\[ \lim_{n \to \infty}   \frac{ \EE\sqa{ \Mphalf^D( (X_i)_{i=1}^n, (Y_i)_{i=1}^n) } - \EE\sqa{ \Mphalf( (X_i)_{i=1}^n, (Y_i)_{i=1}^n) }}{\sqrt{ n \log n } } =0,\]
(it is sufficient to divide both sides of \eqref{eq:alpha-1-2-final} by $\sqrt{ \log n}$). Thus, if the limits
\[ \lim_{n \to \infty}   \EE\sqa{ \Mphalf( (X_i)_{i=1}^n, (Y_i)_{i=1}^n) }/ {\sqrt{n \log n}}, \quad \lim_{n \to \infty}   \EE\sqa{ \Mphalf^D( (X_i)_{i=1}^n, (Y_i)_{i=1}^n) }/ {\sqrt{n \log n}},\]
exist, they coincide.  However, only the upper bound \eqref{eq:bobkov-ledoux} is known. Using a square-filling curve argument, we can prove
\begin{equation}\label{eq:lower-bound-peano} \liminf_{n \to \infty}   \EE\sqa{ \Mphalf^D( (X_i)_{i=1}^n, (Y_i)_{i=1}^n) }/ {\sqrt{n \log n}} >0,\end{equation}
which also implies the lower bound \eqref{eq:lower-bound-half}. Indeed, consider the Peano curve $\gamma: [0,1] \to [0,1]^2$, which is $1/2$-H\"older continuous and pushes the one-dimensional Lebesgue measure on $[0,1]$ into the two-dimensional Lebesgue measure on $[0,1]^2$, so that the points $(\gamma(X_i))_{i=1}^n$, $(\gamma(Y_i))_{i=1}^n$ are i.i.d.\ uniformly distributed on $[0,1]^2$. Since $\gamma(0) = (0,0)$ and $\gamma(1) = (1,1)$, the additional (random) boundary points $(\tilde{x}_j)_{j=1}^m$, $(\tilde{y})_{j=1}^m \subseteq \cur{0,1}$ for the problem $\Mphalf^D( ( X_i)_{i=1}^n, (Y_i)_{i=1}^n)$ are pushed via $\gamma$ to the boundary of $[0,1]^2$. Moreover, since $\gamma$ is $1/2$-H\"older continuous, $|\gamma(y) - \gamma(x)| \le [\gamma]_{C^{1/2}} |y-x|^{1/2}$ for every $s$, $t \in [0,1]$. It follows that
\[\mathsf{M}_1^D( ( \gamma(X_i))_{i=1}^n, (\gamma(Y_i) )_{i=1}^n) \le  [\gamma]_{C^{1/2}} \Mphalf^D( (X_i)_{i=1}^n, (Y_i )_{i=1}^n ),\]
where the left hand side denotes the boundary functional associated to the assignment problem on the square, with Euclidean cost. It is known \cite[Remark 3.3]{AGT19} (notice that there is a missing factor $\sqrt{n}$ there) that for i.i.d.\ uniformly distributed points one the square $(\tilde{X}_i)_{i=1}^n$, $(\tilde{Y}_i)_{i=1}^n \subseteq [0,1]^2$, it holds
\[ \liminf_{n \to \infty} \EE\sqa{ \mathsf{M}_1^D( ( \tilde X_i))_{i=1}^n, (\tilde Y_i) )_{i=1}^n)  } / {\sqrt{n \log n} }>0,\]
hence  \eqref{eq:lower-bound-peano}.

\end{remark}

We end this section stating the variant of our main result for the TSP.  For a proof, we combine straightforwardly \cref{thm:main} with  \cref{lem:capelli} in the case $\alpha>1/2$, while for $\alpha<1/2$, we follow the same argument as in the assignment problem, using \cref{lem:remove-points} and \cref{lem:dirichlet-neumann-deterministic} in the TSP case.

\begin{theorem}\label{thm:main-tsp}
Let $(X_i)_{i=1}^\infty$, $(Y_i)_{i=1}^\infty \subseteq \R$ be i.i.d.\ points with common law $\mu$. Denote with $f$ the absolutely continuous part of $\mu$ (with respect to Lebesgue measure) and $F(t) = \mu((-\infty, t])$ the cumulative distribution of $\mu$.
\begin{enumerate}
\item If $\alpha \in (1/2, 1)$ and $\mu$ is supported on a bounded interval, then convergence in law holds:
\begin{equation}\label{eq:mp-limit-alpha-large}  \lim_{n \to \infty} n^{-1/2} \TSP( (X_i)_{i=1}^n, (Y_i)_{i=1}^n ) = 2  \| \sqrt{2} B \circ F \|_{\KY},\end{equation}
where $(B(t))_{t \in [0,1]}$ denotes a Brownian bridge process. 
\item If $\alpha\in (0,1/2)$ and $\int_{\R} |x|^{\beta} d \mu < \infty$ for some $\beta >4 \alpha/(1-2 \alpha)$, then complete convergence  holds:
\[ \lim_{n \to \infty} n^{\alpha-1} \TSP( (X_i)_{i=1}^n, (Y_i)_{i=1}^n ) = c(\TSP) \int_\I f^{1-\alpha}(t) dt,\]
where $c(\TSP) \in (0,\infty)$ is a constant, depending on $\alpha$ only. 
\end{enumerate}
In both cases, convergence holds also in expectation.

\end{theorem}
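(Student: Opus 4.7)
The plan is to split into the two regimes $\alpha\in(1/2,1)$ and $\alpha\in(0,1/2)$ and in each case transfer the corresponding part of \cref{thm:main} for $\Mp$ to $\TSP$, rather than redoing the analysis from scratch.

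For the regime $1/2<\alpha<1$, I would rely on \cref{lem:capelli}. Although the statement there is for points in $[0,1]$, the same argument on a bounded interval $\I$ (either by rescaling $\I$ to $[0,1]$ or by reinspecting the proof) gives
\[ 0 \le \TSP((X_i)_{i=1}^n,(Y_i)_{i=1}^n)-2\Mp((X_i)_{i=1}^n,(Y_i)_{i=1}^n) \le |\I|^\alpha (1+n^{1-\alpha}).\]
Dividing by $\sqrt{n}$, the gap is of order $n^{1/2-\alpha}\to 0$ since $\alpha>1/2$. Combined with the convergence in law and in expectation of $n^{-1/2}\Mp$ to $\|\sqrt{2}B\circ F\|_{\KY}$ given by \cref{thm:main}(1), this immediately yields convergence of $n^{-1/2}\TSP$ to $2\|\sqrt{2}B\circ F\|_{\KY}$, in both senses.

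For the regime $0<\alpha<1/2$, I would mirror the strategy used for $\Mp$. The subadditive framework of \cite{BaBo} applied to $\TSP$ reduces matters to uniformly distributed points on $[0,1]$ together with the identification of the limits of $n^{\alpha-1}\EE\sqa{\TSP^D}$ and $n^{\alpha-1}\EE\sqa{\TSP}$. The inequality $\TSP^D\le\TSP$ is trivial; for the reverse, I would combine the TSP versions of \cref{lem:dirichlet-neumann-deterministic} and \cref{lem:remove-points}. The former restricts the infimum defining $\TSP^D$ to configurations with at most $m\le[F-\tilde F]_{C^0}+1$ boundary pairs, where $F,\tilde F$ are the unnormalized counting functions of $(X_i)_{i=1}^n$, $(Y_i)_{i=1}^n$, while the latter then yields $\TSP\le \TSP^D+2m$. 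Taking expectations, multiplying by $n^{\alpha-1}$ and using $[F-\tilde F]_{C^0}=n[F_n-\tilde F_n]_{C^0}$, the excess is bounded by $2n^{\alpha-1}+2n^\alpha\EE\sqa{[F_n-\tilde F_n]_{C^0}}$. Combining \eqref{eq:oscillation-variation} with the KMT-type coupling \eqref{eq:dudley-q} at $p=1$, exactly as in \eqref{eq:alpha-1-2-final}, gives $\EE\sqa{[F_n-\tilde F_n]_{C^0}}=O(n^{-1/2})$, so the residual is $O(n^{\alpha-1/2})\to 0$ because $\alpha<1/2$. The two limits therefore coincide, and \cref{thm:main}(2) applied to the boundary functional (whose limit exists by the theory of \cite{BaBo}) completes the argument.

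I do not anticipate any substantive obstacle beyond those already resolved for $\Mp$; the only TSP-specific subtlety is the additional $+1$ in the bound on $m$ in \cref{lem:dirichlet-neumann-deterministic}, reflecting the possibility that a boundary pair may be directly joined by an edge in the optimal cycle. This is handled by a separate case already in the proof of that lemma, and at the asymptotic level contributes only the negligible $n^{\alpha-1}$ term above.
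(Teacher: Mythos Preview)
Your proposal is correct and follows essentially the same approach as the paper: for $\alpha>1/2$ you use \cref{lem:capelli} to show $n^{-1/2}(\TSP-2\Mp)\to 0$ and then invoke \cref{thm:main}(1), while for $\alpha<1/2$ you reproduce the assignment-problem argument using the TSP versions of \cref{lem:remove-points} and \cref{lem:dirichlet-neumann-deterministic} together with the \cite{BaBo} framework. Your handling of the extra $+1$ in the bound on $m$ is correct and, as you note, contributes only a negligible $n^{\alpha-1}$ term.
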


\printbibliography

\end{document}